\newcommand\cyr
\renewcommand\rmdefault{wncyr}
\renewcommand\sfdefault{wncyss}
\renewcommand\encodingdefault{OT2}
\DeclareTextFontCommand{\textcyr}{\cyr}
\def\cprime{\char"7E }
\theoremstyle{plain}
\newtheorem{prop}{Proposition}[section]
\newtheorem{thm}[prop]{Theorem}
\newtheorem{lemma}[prop]{Lemma}
\newtheorem{defn}[prop]{Definition}
\newcommand{\fcy}[1]{\mathcal{#1}}
\newcommand{\bb}[1]{\mathbb{#1}}
\newcommand{\bo}[1]{{\bf #1}}
\newcommand{\ra}{\rightarrow}
\newcommand{\ak}{\alpha}        
\newcommand{\bk}{\beta}         
\newcommand{\gk}{\gamma}        \newcommand{\Gk}{\Gamma}
\newcommand{\dk}{\delta}        
\newcommand{\ek}{\varepsilon}
\newcommand{\lk}{\lambda}
            \newcommand{\X}{\Xi}
          \newcommand{\Fk}{\Phi}
          \newcommand{\Yk}{\Psi}
\newcommand{\nas}{\bo{NAS}_\bullet}
\begin{document}

\title{Non-Anomalous Semigroups and Real Numbers}
\author{Damon Binder \\}
\maketitle
\begin{abstract}
Motivated by intuitive properties of physical quantities, the notion of a non-anomalous semigroup is formulated. These are totally ordered semigroups where there are no `infinitesimally close' elements. The real numbers are then defined as the terminal object in a closely related category. From this definition a field structure on $\bb R$ is derived, relating multiplication to morphisms between non-anomalous semigroups.
\end{abstract}

\section{Introduction}
In this paper, we give a new characterization of the reals: we define the category of pointed non-anomalous semigroups, and identify the reals as the terminal object here. This avoids attributing to $\bb R$ all but the barest semigroup and order structures. We then show that the other structures on the reals naturally follow from this definition. In particular, multiplications originates from the morphisms of the category.

Our motivation is to give a simple and well-motivated definition of the reals. Real numbers are of central importance in both mathematics and science, and so we would expect a mathematical characterization which is simple and elegant. Most crucially, we would hope there to be an intimate connection with our intuitive and philosophical notions of quantity.

There are two main traditional approaches to defining real numbers. The axiomatic approach defines $\bb R$ as the unique complete totally ordered field. This approach involves introducing three structures, addition, multiplication, and order, along with a large number of axioms (around fifteen). The other approach is the constructive approach, where first $\bb N$ is defined, and then from there $\bb Z$, $\bb Q$, and finally $\bb R$ are constructed.

Both approaches are complicated, and the connection to quantity opaque. In the axiomatic approach, the axioms are numerous and difficult to justify. The most pertinent problem is with multiplication. If we had a collection of weights, it is intuitive that they can be ordered, and that weights can be combined (``added"). Yet no clear meaning can be assigned to multiplying two weights. Units reflect this: adding kilograms gives us kilograms, yet multiplying gives us Mg$^2$. We cannot combine nor order kg with Mg$^2$. So though multiplying weights produces a real number, there is no canonical isomorphism between the original quantities and their product.

We remark that even in purely mathematical contexts, multiplication plays a secondary role. In the definition of both measure and metric spaces, order and addition are needed in the axioms. Yet multiplication is not required, so we can trivially to generalize these structures to any ordered group.

We begin by studying totally ordered semigroups. These are the most general objects we can consider where elements can be both added and ordered. A notion of infinitesimally close elements is formulated, originally due to Alimov \cite{11}. We introduce the term `non-anomalous' to describe semigroups lacking infinitesimally close elements. This generalizes the notion of an Archimedean group to semigroups.

In section 3 we define the reals (considered as an ordered semigroup under addition) as the `biggest' possible non-anomalous semigroup. More specifically, $\bb R$ is the terminal object in a category we call the category of pointed non-anomalous semigroups, $\nas$. In particular this means that every non-anomalous semigroup is a subgroup of the reals. Proving that $\nas$ has a terminal object is non-trivial, and will occupy the bulk of the section.

Our definition of the reals is best understood in light of H\"older's theorem \cite{12}. This theorem states that any Archimedean group can be embedded into the reals under addition. Although H\"older's theorem is a statement about the additive and order properties of the real numbers, every proof we are aware of relies on the multiplicative properties of $\bb R$. Our characterization is a companion H\"older's theorem in the opposite direction: we define the real numbers to be the `largest' possible non-anomalous semigroup. This definition minimizes the number of extraneous properties attributed to the reals.

The main result in section 4 is a description of $\bb R$. We show that our definition of $\bb R$ gives a dense and complete totally ordered group, and furthermore, any other dense and complete totally ordered group is isomorphic to $\bb R$. This connects our definition of $\bb R$ to more traditional approach, since by a result from Loonstra \cite{13}, $\bb R$ is the unique dense complete totally ordered group. 

Finally, section 5 relates the properties of $\nas$ to rings and fields. Multiplication is induced by homomorphisms of non-anomalous semigroups. We prove that $\bb R$ has a unique field structure, and furthermore, that $\bb R$ is the unique complete ordered field up to a unique isomorphism.

\section{Quantity and Ordered Semigroups}

\subsection{Axioms for Quantity}
Our first task is to justify the relation between quantities and totally ordered semigroups. To make the discussion concrete, consider a collection of weights along with a balance scale. Placing weights $X$ and $Y$ on either sides of the scale, we find that the weight $X$ always rises. This seems important, so we decide to introduce a symbol $<$ and write $X<Y$ if $X$ rises and $Y$ falls when both are placed on a scale. Obviously, if $X<Y$ we know that $Y<X$ does not hold. If neither $X<Y$ and $Y<X$, then the scale we have cannot distinguish the two weights, and so we decide to say that they are copies of the same weight, $X=Y.$

Comparing more weights, we notice a pattern; if $X<Y$ and $Y<Z$ we find that $X<Z$. So our weights are in fact totally ordered.

We then discover that we can glue weights together, treating them as a single weight. So given weights $X$ and $Y$, we write $X+Y$ to mean the weight gained by sticking $X$ and $Y$ together. We notice that the order we stick our weights together does not matter:
$$X+Y = Y+X, \ \ \ (X+Y)+Z = X+(Y+Z).$$
We also find that if $Y<Z$, then gluing a weight $X$ on to both these weights will preserve this fact
\begin{equation}\label{oa}Y<Z \implies Y+X<Z+X\end{equation}

By considering empirical properties of weights, we have discovered many facts about them. Pithily, we can say that our collection of weights forms a totally ordered commutative semigroup, with \eqref{oa} governing the interaction of the two structures.

We considered weights, but there are many other things can also be considered as totally ordered commutative semigroups. Starting with sticks, we can compare the length of sticks to find the longer stick, and we can lay sticks end to end to produce a new stick; these operations give a totally ordered commutative semigroup structure. Or we can think about the time required to complete tasks, or the money required to buy an item, or the probability that biased coins will all land on heads.

So to study quantities, we will begin with totally ordered semigroups.

\begin{defn} A \bo{totally ordered semigroup} (which we will abbreviate to TOS) is a set $S$ along with a binary relation $<$ and a law of composition so that for any $x,y,z\in S$
\begin{enumerate}
\item If $x<y$ and $y<z$ then $x<z$
\item Exactly one of the following holds: $x<y$, or $x = y$, or $x>y.$
\item $(xy)z = x(yz)$
\item If $x<y$, then $xz<yz$ and $zx<zy.$
\end{enumerate}
\end{defn}

For the moment we have dropped the requirement of commutativity; later we will show that this can be derived from other hypotheses. We will therefore use multiplicative notation throughout this section, and switch to additive notation only once we restrict to commutative objects.

We should also note that the fourth axiom implies that the semigroup is cancellative. If we had instead used $\leq$ instead, this would not be the case. Some authors use definitions which do not require cancellativity.

Given a TOS $S$, we can define the \emph{dual} TOS $\bar S$ to have the same group structure, but reversed inequalities. That is, we have a map $\dk_S: S\ra \bar S$ which is a group isomorphism, and has the property that if $x<y$ then $\dk_S(x)>\dk_S(y).$ Duality allows us to make new definitions and theorems from old ones, by reversing all the inequalities that appear.

\begin{defn} We say an element $x\in S$ is positive if $x^2>x$, and negative if $x^2<x$. We will use $S^+$ to denote the positive elements of $S$, and $S^-$ to denote the negative elements.
\end{defn}

We can see that negative elements are defined in a manner dually to positive elements. If our semigroup has an identity $e$, then from the cancellation law these definitions give the traditional definition of positivity, $x>e$.

\begin{prop}\label{P21} For $x$ in a TOS $S$, $x\in S^+$ is equivalent to $ax>a$ for every $a\in S.$
\end{prop}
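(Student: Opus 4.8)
The plan is to prove the two implications separately, with the reverse implication being immediate and the forward implication carrying all the content. For the direction ``if $ax > a$ for every $a$ then $x \in S^+$'', I would simply specialize the hypothesis to $a = x$, which gives $x^2 = xx > x$, i.e. $x \in S^+$ by the definition of positivity. No axioms beyond that definition are needed here.

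The substance is the forward direction: assuming $x \in S^+$, that is $x < x^2$, I want to establish $a < ax$ for an arbitrary $a \in S$. I would argue by trichotomy (Axiom 2) on the three possibilities $a < ax$, $a = ax$, $a > ax$, and rule out the latter two. If $ax = a$, then left-multiplying the positivity relation $x < x^2$ by $a$ (the $zx < zy$ form of Axiom 4) gives $ax < ax^2$, and associativity rewrites the right-hand side as $ax^2 = (ax)x$; substituting $ax = a$ collapses this to $a$, so the inequality becomes $a < a$, a contradiction. If instead $ax < a$, then right-multiplying by $x$ (the $xz < yz$ form of Axiom 4) gives $(ax)x < ax$, i.e. $ax^2 < ax$, while left-multiplying the positivity relation by $a$ gives $ax < ax^2$; these two inequalities contradict trichotomy. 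Hence $a < ax$, which is exactly $ax > a$, and the proof is complete.

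The only real care needed is bookkeeping between left and right multiplication, since commutativity has not yet been assumed: positivity is phrased with $x$ acting on the right ($x^2 > x$), as is the target $ax > a$, so each application of Axiom 4 must respect the side on which $x$ sits, and associativity must be invoked explicitly to rewrite $(ax)x$ as $ax^2$. I expect the case $ax = a$ to be where a careless argument is most likely to slip; the safe route is the direct substitution described above, which sidesteps any appeal to cancellation. A symmetric argument would handle the left-sided statement $xa > a$ via the dual $\bar S$, but this is not required for the proposition as stated.
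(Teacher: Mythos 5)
Your proof is correct and follows essentially the same route as the paper: the reverse direction specializes to $a=x$, and the forward direction left-multiplies $x<x^2$ by $a$ to get $ax<ax^2=(ax)x$ and then cancels the right-hand factor of $x$, which the paper does by invoking cancellativity and you do by spelling out the trichotomy cases explicitly. The only difference is that your version makes the cancellation step fully explicit (and correctly identifies it as cancellation on the right, where the paper's wording ``canceling the $x$ on the left'' is a small slip).
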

\begin{proof} First assume that $x\in S^+$. Then $x^2>x$ and so $ax^2>ax$ for any $a\in S$. Canceling the $x$ on the left, $ax>a.$ Conversely, if $ax>a$ for every $a\in S$, then for the case that $x = a$, $x^2>x$ and hence $x$ is positive.
\end{proof}

The properties we expect of positive and negative elements follow from this proposition in a straightforward manner. For instance, the product of positive elements is positive. If $y>x$ and $x$ is positive, then $y$ is positive. Finally, it implies that any non-positive and non-negative element must be an identity.

We end the section with a technical lemma which will be useful in the next section.

\begin{lemma}\label{L21} If $xy>yx$, then $x^ny^n>(xy)^n>(yx)^n>y^nx^n$ for every $n\in\bb N$.\end{lemma}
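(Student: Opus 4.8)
The plan is to split the three-term chain into its constituent inequalities, handle the middle one outright, and reduce the two outer ones to a single claim via duality. Throughout I would lean on two elementary consequences of axiom~4: that left- and right-multiplication are strictly order-preserving (and, by trichotomy, order-reflecting, which is the cancellativity already noted after the definition), and the derived \emph{monotonicity of powers}, namely that $a>b$ implies $a^n>b^n$ for every $n$. The latter is a one-line induction: $a^{n+1}=a\cdot a^n>a\cdot b^n>b\cdot b^n=b^{n+1}$, the first step using the inductive hypothesis and the second using $a>b$.

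Granting this, the middle inequality $(xy)^n>(yx)^n$ is immediate, being nothing but monotonicity of powers applied to the hypothesis $xy>yx$ with $xy$ and $yx$ regarded as single elements. The real content lies in the two outer inequalities, which are dual to one another. Passing to the dual TOS $\bar S$ through $\dk_S$ sends the hypothesis $xy>yx$ to $yx>xy$ and sends the right-hand inequality $(yx)^n>y^nx^n$ to the left-hand shape $y^nx^n>(yx)^n$ with $x$ and $y$ interchanged. It therefore suffices to establish the single statement
$$ab>ba \implies a^nb^n>(ab)^n \qquad (n\ge 2),$$
after which $(yx)^n>y^nx^n$ is read off by applying it inside $\bar S$.

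For this key inequality I would induct on $n$, using the associativity-driven identities $(ab)^{n}=a\,(ba)^{n-1}\,b$ and $a^{n}b^{n}=a\,(a^{n-1}b^{n-1})\,b$. Cancelling the outer $a$ on the left and $b$ on the right reduces the target $a^{n+1}b^{n+1}>(ab)^{n+1}$ to the auxiliary comparison $a^nb^n>(ba)^n$. That comparison in turn follows by chaining the inductive hypothesis $a^nb^n>(ab)^n$ with the monotonicity inequality $(ab)^n>(ba)^n$. The base case is $n=2$, which after one cancellation is exactly the hypothesis, since $a^2b^2>(ab)^2\iff ab>ba$.

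I expect the bootstrapping of this induction to be the main subtlety: the step from $n$ to $n+1$ does not close on the left endpoint by itself but must be routed through the intermediate comparison $a^nb^n>(ba)^n$, which is precisely why monotonicity of powers has to be proved first and fed back in. A second point worth flagging is the degenerate case $n=1$, where $x^1y^1=(xy)^1$ and $y^1x^1=(yx)^1$, so the outer relations are equalities rather than strict inequalities; the strict chain as written holds exactly for $n\ge 2$, while for $n=1$ it collapses to the hypothesis $xy>yx$.
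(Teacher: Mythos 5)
Your proof is correct. The overall architecture matches the paper's: both arguments split the chain into three inequalities, both obtain the middle one $(xy)^n>(yx)^n$ by the same one-line induction (your ``monotonicity of powers,'' which the paper carries out inline), and both dispose of the third inequality by passing to the dual semigroup. Where you genuinely diverge is in the inductive step for the outer inequality $x^ny^n>(xy)^n$. The paper pushes an $x$ rightward through the block of $y$'s, applying the hypothesis $xy>yx$ once per transposition in a telescoping chain $x^{n+1}y^{n+1}>x^nyxy^n>x^ny^2xy^{n-1}>\cdots>x^ny^nxy>(xy)^{n+1}$, invoking the inductive hypothesis only at the last link. You instead use the conjugation identity $(ab)^{n+1}=a(ba)^nb$ together with the identity $a^{n+1}b^{n+1}=a(a^nb^n)b$, so that the target reduces (by axiom 4 and its order-reflecting converse) to the auxiliary comparison $a^nb^n>(ba)^n$, which you then get by chaining the inductive hypothesis with the already-established middle inequality. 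Your route is arguably cleaner: it uses the hypothesis only through the base case and the monotonicity lemma, it avoids the $n$-fold swapping (whose display in the paper contains index slips such as $y^{n-2}$ where $y^{n-1}$ is meant, and $(yk)^k$ for $(yx)^k$), and it makes explicit that the outer inequalities follow from the middle one plus cancellation. You are also right, and more careful than the paper, about the degenerate case: for $n=1$ the outer relations are equalities, so the strict chain holds only for $n\ge 2$, whereas the paper's ``trivially true for $n=1$'' glosses over this; the lemma's later uses (Proposition \ref{P22}, Theorem \ref{T21}) are unaffected since they only need the inequalities in the non-strict or large-$n$ form.
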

\begin{proof} When $n = 1$ the statement is trivially true. Assuming it is true for $k$, we then find using the first inequality
$$x^{n+1}y^{n+1}>x^nyx y^n>x^ny^2xy^{n-2} > ... > x^ny^nxy > (xy)^n xy = (xy)^{n+1}$$
and using the second inequality
$$(xy)^{k+1} = (xy)^k(xy)>(yk)^k(xy)>(yk)^k(yx) = (yx)^{k+1}.$$
The third inequality is dual to the first, and so
$$x^{n+1}y^{n+1}>(xy)^{n+1}>(yx)^{k+1}>y^{n+1}x^{n+1}$$
The lemma now follows by induction.\end{proof}

\subsection{Infinities and Infinitesimals}
So far our axioms are not quite strong enough to capture the important properties of numbers. In general totally ordered groups can exhibit very wild behavior. Chehata \cite{21} and Vinogradov \cite{22} independently constructed the same example of totally ordered semigroup which cannot be embedded into any group. Even totally ordered groups can be very complicated. For instance, every free group can be totally ordered. A proof of this fact, along with a detailed discussion of many other ordered groups and their applications to topology, can be found in \cite{23}.

The important property we are looking for is that there are no infinitely big or infinitesimally small quantities. In fact, all we require is that no two elements are infinitesimally close to each other. This is formalized by the notion of an anomalous pair, and is due to Alimov \cite{11}.

\begin{defn} Elements $x,y\in S$ with $x>y$ form an \bo{anomalous pair} if either
$$x^n<y^{n+1} \ \ \text{ or } \ \ y^n>x^{n+1} \ \ \text{ for every n}\in \bb N.$$
The former case implies that $x,y\in S^+$ and the latter implies that $x,y\in S^-$. If no pair in $S$ is anomalous, we shall call $S$ an \bo{non-anomalous} semigroup, or an NAS for short.
\end{defn}

Intuitively, an anomalous pair $x,y\in S^+$ with $x>y$ is a pair of elements where $x$ is infinitesimally larger than $y$, so that for any $n\in\bb N$,
$$y^{n+1}>x^n>y^n.$$

Given elements $x,y\in S^+$, we might consider $x$ to be infinitely larger than $y$ if for every $n\in \bb N$, $x>y^n.$ This means that no matter how many copies of $y$ we take, $x$ is still larger than the combination of $y$'s.
\begin{defn} A semigroup is \bo{Archimedean} if
\begin{enumerate}
\item For every $x,y\in S^+$ there exists an $n\in \bb N$ so that $x<y^n.$
\item For every $x,y\in S^-$ there exists an $n\in \bb N$ so that $x>y^n.$
\end{enumerate}
\end{defn}
The Archimedean property effectively requires that there are no `infinitely big' elements in the semigroup.

\begin{prop}[Alimov]\label{P22} Any non-anomalous semigroup is Archimedean.
\end{prop}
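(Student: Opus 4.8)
The plan is to argue by contradiction: assuming $S$ is an NAS but fails to be Archimedean, I will manufacture an anomalous pair. Since the two Archimedean conditions are dual to one another — the second is precisely the first read off in $\bar S$ via $\dk_S$, and the dual of an anomalous pair is again anomalous — it suffices to treat the failure of the first condition and invoke duality for the second.

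So suppose the first condition fails: there exist $x,y\in S^+$ with $x\not<y^n$ for every $n\in\bb N$, i.e. $x\geq y^n$ for all $n$. The first step is to upgrade this to strict inequality $x>y^n$ for every $n$. Indeed, if $x=y^{n_0}$ for some $n_0$, then since $y$ is positive Proposition \ref{P21} gives $y^{n_0+1}=y^{n_0}y>y^{n_0}=x$, contradicting $x\geq y^{n_0+1}$; hence $y^n<x$ strictly for all $n$.

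The candidate anomalous pair is $(xy,x)$. Because $y$ is positive, $xy>x$ by Proposition \ref{P21}, so the elements are ordered correctly and both lie in $S^+$, matching the ``former'' case of the definition. It remains to check $(xy)^n<x^{n+1}$ for all $n$. The key point is that $y^n<x$ together with the fourth axiom yields both $x^ny^n<x^nx=x^{n+1}$ and $y^nx^n<xx^n=x^{n+1}$. To bound $(xy)^n$ itself I apply Lemma \ref{L21} in three cases: if $xy>yx$ then $(xy)^n<x^ny^n$; if $yx>xy$ then (applying the lemma with the roles of $x$ and $y$ exchanged) $(xy)^n<y^nx^n$; and if $xy=yx$ then $(xy)^n=x^ny^n$. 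In every case $(xy)^n<x^{n+1}$, so $(xy,x)$ is an anomalous pair, contradicting that $S$ is non-anomalous.

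The main obstacle is exactly the non-commutativity: in an abelian setting one would write $(xy)^n=x^ny^n$ and be done, but in a general TOS the power $(xy)^n$ need not factor, which is the gap Lemma \ref{L21} is designed to fill. Once its three cases are in hand the comparison with $x^{n+1}$ is immediate, and the duality argument disposes of the negative case with no further computation.
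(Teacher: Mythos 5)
Your proof is correct and follows essentially the same route as the paper: both exhibit $x$ and $xy$ as the anomalous pair and control $(xy)^n$ via Lemma \ref{L21}. In fact you are somewhat more careful than the paper's own argument, which tacitly assumes $xy>yx$ when invoking Lemma \ref{L21}, whereas you handle the cases $xy>yx$, $yx>xy$, and $xy=yx$ explicitly and also justify upgrading $x\geq y^n$ to strict inequality.
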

\begin{proof} Given an TOS $S$, let $x,y\in S^+$ be such that $y^n<x$ for every $n\in\bb N$. Applying Lemma \ref{L21}, we find that
$$x^n < y^nx^n < (xy)^n, \ \ \ (xy)^n < x^ny^n < x^{n+1} $$
and so $x$ and $xy$ form an anomalous pair. A dual argument holds for the case where $x,y\in S^-.$
\end{proof}

We are now in a position to prove our first major result, that any non-anomalous semigroup is commutative. From this perspective, the commutativity of addition in the real numbers is not an axiom, but a consequence of the fact that the real numbers are non-anomalous. The proof will require two technical lemmas, which we present first.

\begin{lemma}\label{L22} If $S$ is an Archimedean semigroup and $x,y\in S^+$ with $x>y$, then there exists an $n\in\bb N$ such that $y^{n+1}>x\geq y^n$.
\end{lemma}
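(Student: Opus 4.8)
The plan is to realize $n$ as the ``integer part'' of $x$ relative to $y$, namely the largest power of $y$ that does not exceed $x$. First I would establish that for a positive element the powers $y, y^2, y^3, \dots$ form a strictly increasing sequence. This is immediate from Proposition \ref{P21}: since $y\in S^+$, applying the proposition with $a = y^n$ gives $y^n y = y^{n+1} > y^n$ for every $n\in\bb N$. Strict monotonicity then propagates the relation $y^k\leq x$ downward, since $k<n$ together with $y^n\leq x$ yields $y^k < y^n \leq x$.

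Next I would consider the set $A = \{\, n\in\bb N : y^n\leq x \,\}$. Because $x>y = y^1$, we have $1\in A$, so $A$ is nonempty. For an upper bound I invoke the Archimedean hypothesis: as $x,y\in S^+$, there is some $m\in\bb N$ with $x<y^m$, and strict monotonicity then gives $x<y^k$ for all $k\geq m$, so no integer $\geq m$ lies in $A$. Hence $A$ is bounded above.

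Being a nonempty, downward-closed, bounded subset of $\bb N$, the set $A$ possesses a largest element $n$. For this $n$ we have $y^n\leq x$ by membership in $A$, while $n+1\notin A$ means $y^{n+1}\leq x$ fails, so by trichotomy (axiom 2 of a TOS) $y^{n+1}>x$. Combining these yields $y^{n+1}>x\geq y^n$, as required.

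I expect no serious obstacle in this argument; the only points needing care are keeping the strict and non-strict inequalities straight (the statement permits the boundary case $x=y^n$, which is precisely why $A$ is defined using $\leq$ rather than $<$) and confirming that the Archimedean property yields an honest upper bound on $A$ rather than merely a single exceeding power. This last point rests on the strict monotonicity of powers established at the outset, so that step does the real work.
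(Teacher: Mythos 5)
Your proof is correct and follows essentially the same route as the paper: both arguments take $n$ to be the maximum of the set of exponents $k$ with $y^k\leq x$, using $x>y$ for nonemptiness and the Archimedean property for an upper bound. Your version is in fact slightly more careful than the paper's, which leaves implicit the strict monotonicity of powers (your application of Proposition \ref{P21} with $a=y^n$) needed to turn the single witness $x<y^m$ into a genuine upper bound on that set.
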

\begin{proof} Since $S$ is Archimedean we know that there exists an $m\in\bb N$ with $x<y^m$. We also know that $x>y$, and hence there must exist a maximum $n\in\bb N$ satisfying $x\geq y^n$ but $y^{n+1}>x.$
\end{proof}

\begin{lemma}\label{L23} Let $S$ be non-anomalous with $x\in S^+$ and $y\in S^-$. There exists an $n\in\bb N$ such that $xy^n\in S^+.$
\end{lemma}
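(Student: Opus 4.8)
The plan is to track the single descending chain $a_n := xy^n$ and to locate the last index at which it still lies in $S^+$. Since $S$ is non-anomalous, Proposition \ref{P22} tells us $S$ is Archimedean, and this is the property that does the real work. First I would record the monotonicity: because $y\in S^-$, the dual of Proposition \ref{P21} gives $by<b$ for every $b$, so right-multiplication by $y$ is strictly decreasing and hence $a_0>a_1>a_2>\cdots$ is a strictly decreasing chain, where we take $a_0:=x$. Next I would observe that the set of indices $n$ with $a_n=xy^n\in S^+$ is an initial segment of $\bb N$: if $a_n\in S^+$ and $m<n$ then $a_m>a_n$, and any element exceeding a positive element is itself positive, so $a_m\in S^+$. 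This initial segment is non-empty, since $a_0=x\in S^+$.

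The heart of the argument is to show that this initial segment is also bounded, i.e. that the chain eventually enters $S^-$; the greatest index still lying in $S^+$ is then the $n$ demanded by the statement. Here I would argue by contradiction: suppose $a_n=xy^n\in S^+$ for every $n$. Informally, $a_n\in S^+$ says that $x$ still dominates the accumulated negativity of $y^n$, so that $x$ bounds every ``multiple'' of the positive quantity opposed to $y$; asserting this for all $n$ manufactures a fixed $x\in S^+$ sitting above an unbounded family, which is exactly the failure of the Archimedean property forbidden by Proposition \ref{P22}. To turn this into a clean contradiction I would first translate the condition $a_n\in S^+$, via left-cancellation of the leading $x$ in $(xy^n)^2>xy^n$, into the inequality $y^nxy^n>y^n$, and then use Lemma \ref{L21} to control the non-commutative reorderings of the $x$'s and $y$'s so that the comparison can be made against a fixed positive element whose powers violate the Archimedean bound.

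The main obstacle is precisely this eventual-negativity step. In a group it would be immediate, since one writes $x=a_ny^{-n}$ and reads off $x>(y^{-1})^n$ for all $n$, directly contradicting the Archimedean property; but here $S$ is only a cancellative semigroup, possibly without identity or inverses, so $y^{-1}$ is unavailable and the descent of $xy^n$ below a fixed negative element cannot be obtained by cancellation alone. The work is therefore in showing that, for fixed $b$, the elements $by^n$ are cofinal downward, and this is where non-anomalousness, through the Archimedean property and Lemma \ref{L21}, must be invoked rather than mere monotonicity. Once eventual negativity is secured, the initial segment of positive indices has a greatest element $n$, and this $n$ satisfies $xy^n\in S^+$, completing the proof.
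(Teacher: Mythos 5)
Your argument cannot be completed, and the obstruction is visible at the step ``the greatest index still lying in $S^+$ is then the $n$ demanded by the statement.'' Nothing in your proposal guarantees that this greatest index is at least $1$, and in general it is not: take $S=\bb R$ under addition (so that $xy^n$, written additively, is $x+ny$), with $x=1$ and $y=-5$. Then $x+ny<0$ for every $n\geq 1$, so your initial segment of positive indices is exactly $\{0\}$, and your construction returns only $a_0=x\in S^+$, which is the hypothesis rather than the conclusion ($y^0$ is not even defined in a semigroup, which need not have an identity). The same example shows that the statement read literally is false, so the difficulty is not one you could have overcome by finishing the ``eventual negativity'' step; that step is provable, but it yields nothing toward the lemma.

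What the lemma must say --- and what the paper actually proves and later uses --- has the exponent on the \emph{positive} element: there exists $n$ with $x^n y\in S^+$. The paper's proof concludes exactly this (it produces $x^{m+1}y\in S^+$), and the applications confirm it: Theorem \ref{T21} needs $c=a^kb$ positive with $a$ positive, and Theorem \ref{T31} needs $x+la$ positive with $a$ the positive basepoint. The correct argument is therefore of ascending rather than descending type: if $xy\in S^+$ one is done; otherwise $y$ and $xy$ are both negative, and since they do not form an anomalous pair there is an $m$ with $y^m<(xy)^{m+1}$; multiplying on the left by $x$ and rearranging with Lemma \ref{L21} (in the case $xy>yx$; the paper leaves the dual case implicit) gives $x<x^{m+1}y$, and any element above the positive element $x$ is positive. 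Your monotone-chain framework could only be salvaged by applying it to the increasing chain $b_n=x^ny$ and proving that it eventually enters $S^+$, and that is precisely where the anomalous-pair inequality --- not mere monotonicity plus the Archimedean property of Proposition \ref{P22} --- must be invoked.
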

\begin{proof} If $xy\in S^+$ then the result follows trivially, so assume that $xy\in S^-$. Since $y$ and $xy$ are non-anomalous, there is a $m\in \bb N$ with
$$y^m<(xy)^{m+1}\implies xy^m<x(xy)^{m+1}.$$
Let us first assume that $xy>yx$, then applying Lemma \ref{L21},
$$xy^m<x(xy)^{m+1} < x^{m+2}y^{m+1}\implies x<x^{m+1}y.$$
Since $x$ is positive this implies that $x^{m+1}y$ is positive, and this completes the proof.
\end{proof}

\begin{thm}[Alimov]\label{T21} Any non-anomalous semigroup $S$ is commutative.
\end{thm}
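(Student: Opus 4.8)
The plan is to argue by contradiction: assuming $xy \neq yx$ for some $x,y \in S$, I would manufacture an anomalous pair and invoke the hypothesis that $S$ is non-anomalous. By trichotomy every element is positive, negative, or an identity, and using Proposition \ref{P21} one checks that an identity $e$ satisfies $ea = a = ae$ for all $a$ (if $ea > a$ then $e(ea) > ea$, contradicting $e(ea) = e^2 a = ea$), so it is central and such pairs commute trivially. It therefore suffices to treat $x,y$ lying in $S^+ \cup S^-$, which I would split into three cases: both positive, both negative, and mixed sign.

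The heart of the argument is the case $x,y \in S^+$. Suppose $xy > yx$ (the reverse inequality is handled by swapping $x$ and $y$), and set $a = xy$, $b = yx$; both are positive and $a > b$. The key observation is the purely associative identity $b^{n+1} = (yx)^{n+1} = y\,(xy)^n\,x = y\,a^n\,x$. Since $x$ is positive, Proposition \ref{P21} gives $a^n x > a^n$, and since $y$ is positive its left-hand analogue gives $y(a^n x) > a^n x$; hence $b^{n+1} = y a^n x > a^n$ for every $n$. Together with $a > b$ this is exactly the statement that $a = xy$ and $b = yx$ form an anomalous pair, contradicting non-anomalousness. So any two positive elements commute. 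The case $x,y \in S^-$ then follows by duality: the dual $\bar S$ is again non-anomalous and $\dk_S$ carries $S^-$ into $\bar S^+$, so applying the positive case in $\bar S$ gives $\dk_S(x)\dk_S(y) = \dk_S(y)\dk_S(x)$, whence $xy = yx$.

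I expect the main obstacle to be the mixed case, $x \in S^+$ and $y \in S^-$, where the two elements live in different `worlds' and the clean power identity above is unavailable. Here Lemma \ref{L23} is essential: it supplies an $n$ with $xy^n \in S^+$. Since $xy^n$ and $x$ are then both positive, the case already proved shows they commute, and cancelling one $x$ from $x y^n x = x^2 y^n$ yields $x y^n = y^n x$ --- that is, $x$ commutes with this one power of $y$. To descend from $y^n$ to $y$ I would record the elementary induction that $xy > yx$ forces $xy^k > y^k x$ for all $k$ (from $xy^k > y^k x$ multiply on the right by $y$ and then apply $xy > yx$ on the left), and symmetrically that $xy < yx$ forces $xy^k < y^k x$. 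Either strict possibility contradicts $x y^n = y^n x$, so $xy = yx$. The remaining delicate points are merely confirming that identities are central and that duality preserves the non-anomalous property; everything else is bookkeeping with the cancellation axiom. Notably this route needs only Proposition \ref{P21} and Lemma \ref{L23}, not the full strength of Lemma \ref{L21}.
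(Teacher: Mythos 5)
Your proof is correct, and the core case is handled by a genuinely different and arguably cleaner mechanism than the paper's. For two noncommuting positives with $xy>yx$, the paper invokes the non-anomalous hypothesis to produce an $n$ with $(xy)^n>(yx)^{n+1}$ and then chains inequalities through Lemmas \ref{L21} and \ref{L22} until it reaches the absurdity $y>y^2$; you instead exhibit $(xy,yx)$ directly as an anomalous pair via the telescoping identity $(yx)^{n+1}=y(xy)^n x$ together with Proposition \ref{P21} (and its left-handed analogue $ya>a$ for $y\in S^+$, which is not literally stated in the paper but follows by the same cancellation argument --- worth a sentence). This bypasses Lemma \ref{L21}, Lemma \ref{L22}, and the Archimedean property entirely, and makes the intuition visible: whenever two positives fail to commute, their two products are forced to be infinitesimally close. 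In the mixed-sign case both arguments lean on Lemma \ref{L23}, but the paper's reduction is slightly slicker: with $c=a^kb$ positive it notes $ac=a^k(ab)\neq a^k(ba)=ca$, so $a$ and $c$ are noncommuting positive elements and the positive case applies at once; your route --- commuting $xy^n$ with $x$, cancelling, and descending from $y^n$ to $y$ by the monotonicity induction $xy>yx\implies xy^k>y^kx$ --- costs a few extra lines but is equally valid. Your explicit treatment of identity elements and of duality preserving non-anomalousness covers points the paper leaves implicit.
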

\begin{proof} We will begin by showing that any two positive elements of $S$ must commute. This will be achieved through contradiction, assuming $x,y\in S^+$ do not commute and $x>y$. Without loss of generality we can take $xy>yx$.

Since $xy$ and $yx$ are not anomalous, there exists an $n\in\bb N$ with
$$(xy)^n>(yx)^{n+1}.$$

Using Lemma \ref{L21}, we find that
$$x^ny^n>(xy)^n > (yx)^{n+1} = (yx)^nyx > (yx)^ny^2.$$

Using Lemma \ref{L22} there exists $m\in\bb N$ so that $y^{m+1}>x^n\geq y^m,$ and so
$$y^{m+1}y^n>x^ny^n>y^nx^ny^2>y^ny^my^2.$$
But this then implies that
$$y>y^2,$$
which contradicts the fact that $y\in S^+.$

To show this suffices to prove the general case, assume $a,b\in S$ do not commute. There are three possibilities, of which, the possibility that both are positive has been ruled out. Instead if both $a$ and $b$ are negative, the dual of $S$ has noncommuting positive elements which is impossible. Finally, if only of $a$ is positive, applying Lemma \ref{L23} there exists a $k\in \bb N$ so that $c = a^kb$ is positive. From the cancellative law,
$$ac = a^k(ab) \neq a^k(ba) = ca,$$
and so $a$ and $c$ are noncommuting positive elements in $S$.\end{proof}

\section{Pointed Non-anomalous Semigroups}
\subsection{Morphisms}
In the last section we showed that to understand the universal role of the real numbers, we should try to understand non-anomalous semigroups. We also proved that these semigroups are commutative. In light of this we shall adopt additive notation.

\begin{defn} A positive morphism $f$ between two totally ordered semigroups is a group homomorphism that is an order embedding:
$$f(x+y) = f(x)+f(y), \ \ \ x<y\implies f(x)<f(y). $$
A negative morphism is a group homomorphism that reverses the order
$$x<y\implies f(x)>f(y).$$
\end{defn}

Morphisms are automatically injective, since if $f(x) = f(y)$ then this implies that neither $x>y$ nor $y>x.$ We will denote the category of non-anomalous semigroups by \bo{NAS}, with the arrows being morphisms (both positive and negative) between semigroups. 

\begin{thm}\label{T31} A morphism between two non-anomalous semigroups is determined by where it maps a single non-identity.
\end{thm}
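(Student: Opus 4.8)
The plan is to prove the sharper uniqueness statement: if $f,g$ are morphisms $S\ra T$ of non-anomalous semigroups that agree on a single non-identity $a$, then $f=g$. First I would dispose of signs. Morphisms are injective and send the identity to the identity, so $f(a)=g(a)$ is a non-identity and lies in exactly one of $T^+$, $T^-$. Since a positive morphism sends $S^+$ into $T^+$ while a negative one sends $S^+$ into $T^-$, the common value $f(a)=g(a)$ having a single sign forces $f$ and $g$ to be of the same type. Composing with the dual isomorphism $\dk_T$ (which preserves non-anomalousness) converts negative morphisms into positive ones, so I may assume $f,g$ are both positive; by the same trick on the source I may take the ruler $a\in S^+$, and I write $b=f(a)=g(a)\in T^+$.

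The core idea is that a positive morphism is rigid because it must respect every comparison between integer multiples. For $x\in S^+$ and any $m,n\in\bb N$, additivity together with the order-embedding property gives $nx\geq ma \iff n f(x)\geq m f(a)=mb$, and identically for $g$. Hence $u:=f(x)$ and $v:=g(x)$ satisfy the same family of rational comparisons against $b$: for all $m,n$ we have $nu\geq mb \iff nv\geq mb$. Everything now reduces to the key claim that in a non-anomalous semigroup two positive elements with identical comparisons against a fixed $b\in T^+$ must coincide.

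This claim is the heart of the proof and the one genuine obstacle, since it is exactly where non-anomalousness is indispensable. Suppose $u>v$. As $u,v$ do not form an anomalous pair there is a $k$ with $ku\geq(k+1)v=kv+v$; using Archimedeanity (Proposition \ref{P22}) I choose $J$ with $Jv>b$ and set $N=Jk$, so that scaling opens a gap wider than the ruler, $Nu\geq Nv+Jv>Nv+b$. Bracketing $Nv$ between consecutive multiples of $b$ by the Archimedean squeeze (Lemma \ref{L22}) as $mb\leq Nv<(m+1)b$, the estimate $(m+1)b\leq Nv+b<Nu$ slots the mark $(m+1)b$ strictly between $Nv$ and $Nu$. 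Thus $Nu\geq(m+1)b$ holds while $Nv\geq(m+1)b$ fails, contradicting the identity of comparisons; hence $u=v$, and $f,g$ agree on all of $S^+$.

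It remains to propagate the agreement to the rest of $S$. Identities go to the identity, so only $S^-$ is left. For $x\in S^-$ I would add copies of the positive ruler to escape upward: the family $x+na$ is strictly increasing, and if it never left $S^-$ then $x+a$ and $x$ would form an anomalous pair in $S^-$, since $x+na\in S^-$ for all $n$ is precisely the condition $nx>(n+1)(x+a)$ for all $n$, which is impossible. So there is an $n$ with $x+na\in S^+$; as $f$ and $g$ already agree there and on $na$, cancellation in $f(x)+nb=f(x+na)=g(x+na)=g(x)+nb$ yields $f(x)=g(x)$. This exhausts all cases and gives $f=g$.
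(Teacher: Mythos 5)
Your proof is correct and follows essentially the same route as the paper's: non-anomalousness of the pair $f(x),g(x)$ opens a gap, the Archimedean property (Proposition \ref{P22}) scales that gap past the ruler $b$, Lemma \ref{L22} drops a multiple of $b$ into it, and the order-embedding property transports the resulting strict separation back to an impossible pair of inequalities in the source. Your repackaging via ``identical comparisons against $b$'' and your explicit handling of the mixed positive/negative case are nice touches, but the underlying mechanism is the paper's.
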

\begin{proof} We proceed by contradiction. Assume that $f_1$ and $f_2$ are morphisms from $A$ to $B$ with $f_1(a) = f_2(a) = b$ but $f_1(x)>f_2(x).$

Assume first that $f_1$ and $f_2$ are positive morphisms. Then $a$ and $b$ have the same sign. We can take $a$ and $b$ to be positive, as otherwise we can study the maps $\dk_B\circ f_{1,2}\circ\dk_A$ between $\bar A$ and $\bar B$.

Using Lemma \ref{L23}, there is an $l\in\bb N$ so that $y = x+la$ is positive. Since $f_1(y)>f_2(y)$ and since there are no anomalous pairs in $B$, there is an $n\in\bb N$ with
$$nf_1(y)>(n+1)f_2(y).$$
By Proposition \ref{P22}, $B$ is Archimedean. So there is a $k\in\bb N$ satisfying $kf_2(y)>b$, and hence
$$knf_1(y) > k(n+1)f_2(y) = knf_2(y) + kf_2(y) > knf_2(y) + b.$$
Finally, by Lemma \ref{L22} there exists an $m\in \bb N$ so that $(m+1)b>knf_1(y)\geq mb.$ Then as $f_1$ and $f_2$ are order preserving,
$$f_1(kny) = knf_1(y)\geq mb =  mf_1(a) = f_1(ma) \implies kny\geq  ma$$
$$(m+1)f_2(a) = (m+1)b>knf_1(y)>knf_2(y)+b = knf_2(y)+f_2(a)$$
$$\implies f_2(ma) = mf_2(a) > knf_2(y) = f_2(kny) \implies ma>kny.$$
This gives us the contradiction we sought.

If instead $f_1$ and $f_2$ are negative morphisms, then we know that $\dk_B\circ f_1$ and $\dk_B\circ f_2$ will be positive morphisms from $A$ to $\bar B$. The above result then implies that $\dk_B\circ f_1 = \dk_B\circ f_2$, and so as $\dk_B$ is an isomorphism, $f_1 = f_2$. \end{proof}

Theorem 2 suggests that the category $\bo{NAS}$ is not the best category to consider when trying to understand non-anomalous semigroups. We should instead be considering \emph{the category of pointed non-anomalous semigroups} $\nas$. In this category, the objects are pairs $(A,a)$ where $A$ is a non-anomalous semigroup and $a\in A$ is not an identity. An arrow $f:(A,a)\ra (B,b)$ is a morphism from $A$ to $B$ with $f(a) = b$. Note that we can always take the basepoint to be positive, since any $(A,a)$ is isomorphic to its dual via the morphism $\dk_A$.

Theorem 2 then says that $\nas$ is a \emph{thin} category, that is, a category where there is at most one morphism between any two objects. Thin categories are much simpler to work with then general categories. For instance, since any arrow between two objects is unique, we can often drop the labels of arrows in a diagram. In proposition \ref{P31} we list a few elementary properties of these categories, which will be useful in this section and the next.

\begin{prop}\label{P31} Let $\fcy C$ and $\fcy D$ be thin categories, and let $F,G:\fcy C\ra\fcy D$ be functors. Then
\begin{enumerate}
\item Any diagram in $\fcy C$ automatically commutes.
\item For objects $X$ and $Y$ in $\fcy C$, if there exists morphisms $X\ra Y$ and $Y\ra X$, then $X\approx Y.$
\item If for all $X\in\text{ob}(\fcy C)$ there is a $\eta_X :FX\ra GX$, then $\eta$ is a natural transformation.
\item If for all $X\in\text{ob}(\fcy C)$, $FX\approx GX$, then $F$ and $G$ are naturally isomorphic.
\item If there exists natural transformations $\eta:1_{\fcy C}\ra GF$ and $\ek: FG\ra 1_{\fcy D}$ then $F$ is left adjoint to $G.$
\end{enumerate}
\end{prop}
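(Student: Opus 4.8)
The plan is to reduce all five parts to a single observation: in a thin category, any two morphisms sharing the same source and the same target are equal, since by definition there is at most one morphism between any two objects. I would state this principle once at the outset and then dispatch each part by recognizing that the relevant equation is an equation between such \emph{parallel} morphisms.

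For (1), recall that a diagram commutes precisely when any two directed paths with common start and end yield the same composite; since any two such composites are parallel, thinness of $\fcy C$ forces them to agree. For (2), given $f\colon X\ra Y$ and $g\colon Y\ra X$, the composites $g\circ f$ and $1_X$ are parallel (both $X\ra X$), hence equal; likewise $f\circ g = 1_Y$, so $f$ is an isomorphism and $X\approx Y$.

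For (3), the naturality square attached to a morphism $f\colon X\ra Y$ asserts $G(f)\circ\eta_X = \eta_Y\circ F(f)$; both composites are morphisms $FX\ra GY$ in $\fcy D$, so thinness of $\fcy D$ makes the square commute and $\eta$ is natural. For (4), the hypothesis $FX\approx GX$ supplies, for each object $X$, a morphism $\eta_X\colon FX\ra GX$; by (3) these assemble into a natural transformation, and since an inverse to $\eta_X$ exists and must be the unique morphism $GX\ra FX$, part (2) shows each $\eta_X$ is an isomorphism. Hence $\eta$ is a natural isomorphism and $F\approx G$.

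Finally, for (5) I would invoke the unit--counit description of adjunctions: $F$ is left adjoint to $G$ exactly when there exist $\eta\colon 1_{\fcy C}\ra GF$ and $\ek\colon FG\ra 1_{\fcy D}$ satisfying the two triangle identities $\ek F\circ F\eta = 1_F$ and $G\ek\circ\eta G = 1_G$. Evaluated at an object $X$, these read $\ek_{FX}\circ F(\eta_X)=1_{FX}$ and $G(\ek_X)\circ\eta_{GX}=1_{GX}$, which are equalities of parallel morphisms $FX\ra FX$ in $\fcy D$ and $GX\ra GX$ in $\fcy C$ respectively; both hold immediately by thinness. The only genuine care needed anywhere is this last typechecking of sources and targets in the triangle identities --- once the domains and codomains are confirmed to match up, thinness does all the work.
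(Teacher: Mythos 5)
Your proposal is correct and follows essentially the same route as the paper: everything is reduced to the observation that parallel morphisms in a thin category coincide, with (2), (3) and (5) obtained by applying this to the triangle, naturality-square, and unit--counit diagrams respectively, exactly as in the paper's proof. Your added care in checking that the relevant composites in (3) and (5) live in $\fcy D$ (resp.\ $\fcy C$) and are genuinely parallel is a minor refinement of, not a departure from, the paper's argument.
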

\begin{proof} Start with (1). In a diagram, if there are morphisms $f_1f_2...f_n$ and $g_1g_2...g_m$ between objects $X$ and $Y$, then since $\fcy C$ is thin, 
$$f_1f_2...f_n = g_1g_2...g_m$$
and so the diagram commutes. Propositions (2) and (3) are just specific applications of this to the diagrams
\[\begin{tikzcd}
X \arrow{dr}{1_X}\arrow{r}{f}  & Y\arrow{d}{g} & & FX \arrow{d}{Ff}\arrow{r}{\eta_X} & GX \arrow{d}{Gf} \\
 & X                  & & FY               \arrow{r}{\eta_X} & GY
\end{tikzcd}\]
and (4) follows directly from (3). Proposition (5) follows from applying (1) to the unit-counit equations:
\[\begin{tikzcd}
FX \arrow{dr}{1_{FX}}\arrow{r}{F\eta_X} & FGFX\arrow{d}{\ek_{FX}} & & GY \arrow{dr}{1_{GY}}\arrow{r}{\eta_{GY}} & GFGY\arrow{d}{G\ek_Y} \\
& FX                                              &  & & GY  \end{tikzcd}\]
\end{proof}

\subsection{A Lemma}
Showing that $\nas$ is thin has greatly simplified our understanding of the category. We would like to prove a number of other properties of $\nas$, in particular, the existence of a terminal object. This will require a technical lemma:

\begin{lemma}\label{L31} For any family $(A_i,a_i)$ of objects in $\nas$ indexed by a set $I$, there exists an object $(U,u)$ so that there are morphisms $f_i:(A_i,a_i)\ra (U,u).$
\end{lemma}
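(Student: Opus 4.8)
The plan is to realize all of the $(A_i,a_i)$ inside a single copy of the reals by scaling so that every basepoint maps to the same positive value, and to take $(U,u)$ to be (a subgroup of) $\bb R$ with $u$ that common value. By Theorem \ref{T21} each $A_i$ is commutative and by Proposition \ref{P22} Archimedean, and we may assume every basepoint $a_i$ is positive (replacing $(A_i,a_i)$ by its dual via $\dk_{A_i}$ if necessary). Passing to the totally ordered group of fractions of $A_i$, which is again commutative and Archimedean, I would define the \emph{ratio-to-$a_i$} map
$$\phi_i(x)=\sup\{\,q\in\bb Q : x\ge q\,a_i\,\},$$
where for $q=m/n$ the inequality $x\ge q\,a_i$ is read off from the comparison $n x\ge m a_i$ inside the semigroup. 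The Archimedean property guarantees this supremum is finite, so $\phi_i:A_i\ra\bb R$ is well defined and $\phi_i(a_i)=1$.

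Next I would verify that $\phi_i$ is a positive morphism. Additivity $\phi_i(x+y)=\phi_i(x)+\phi_i(y)$ is the standard manipulation of rational comparisons. For strictness, the Archimedean property (Proposition \ref{P22}) is exactly what is needed: if $x>y$ then $x-y>0$, so there is an $n$ with $n(x-y)>a_i$, whence $\phi_i(x)-\phi_i(y)=\phi_i(x-y)\ge 1/n>0$. Thus $\phi_i$ is strictly order preserving, and in particular injective, so it is a morphism of totally ordered semigroups sending $a_i$ to $1$.

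I would then set $U$ to be the subgroup of $\bb R$ generated by $\bigcup_i\phi_i(A_i)$, put $u=1$, and take $f_i=\phi_i$. Since $u=1\ne 0$, the pair $(U,u)$ is a legitimate object once $U\in\bo{NAS}$; but any subsemigroup of $\bb R$ inherits non-anomaly, because an anomalous pair in $U$ would be one in $\bb R$, and $\bb R$ has none: for $x>y>0$ one has $n(x-y)>y$ for large $n$, so the relation $nx<(n+1)y$ fails eventually. Each $f_i:(A_i,a_i)\ra(U,u)$ is then a morphism with $f_i(a_i)=u$, which is what the lemma asserts.

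The main obstacle is the construction and verification of the embeddings $\phi_i$; this is precisely a H\"older-type embedding, and as flagged in the introduction the natural route passes through $\bb R$. To keep the argument self-contained and avoid presupposing the very object we are trying to characterize, I would build the target intrinsically from $\bb Q$ by Dedekind cuts rather than invoking $\bb R$ as a black box, sending $x\mapsto\{q\in\bb Q : q\,a_i\le x\}$ into the ordered group of cuts and checking directly that this group has no anomalous pair. The remaining bookkeeping---finiteness of the suprema, additivity, and the sign conventions for negative elements handled in the group of fractions---is routine given Propositions \ref{P21}, \ref{P22} and Theorem \ref{T21}.
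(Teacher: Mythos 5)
Your proposal is correct in outline but takes a genuinely different route from the paper. The paper never constructs or invokes a completed model of $\bb R$ at this stage: it forms the coproduct $B=\bigoplus_i A_i$ of abelian semigroups, equips $B$ with only a \emph{weak} order by means of the integer-valued counting functions $\bk_n$ and $\gk_n$ (which are, in effect, $\lfloor 2^n\phi_i\rfloor$ --- finite-precision versions of your ratio map that are never pushed to a limit), proves in Lemma \ref{H32} that $B$ is a non-anomalous weakly ordered semigroup in which all the basepoints $p_i(a_i)$ are mutually incomparable, and then obtains $(U,u)$ by quotienting out incomparability via Proposition \ref{H0}. Your route is instead the classical H\"older embedding: build a Dedekind-complete ordered group from $\bb Q$, map each $A_i$ into it by $\phi_i(x)=\sup\{q: x\ge q a_i\}$ normalized so $\phi_i(a_i)=1$, and take $U$ inside that target. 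This is not circular, since your target is constructed independently of the paper's $\bb R$ (the terminal object), and the details you defer --- that the ordered group of fractions of a non-anomalous TOS is Archimedean, additivity of the cut map, the sign bookkeeping via Lemma \ref{L23} --- are genuinely routine, though comparable in bulk to the paper's Lemmas \ref{L30} and \ref{H1}--\ref{H32}. The trade-off is this: your argument, once completed, delivers far more than the lemma asks --- a morphism from \emph{every} object of $\nas$ into a single fixed object --- which by thinness (Theorem \ref{T31}) immediately yields the terminal object and short-circuits Theorem \ref{T32}, Theorem \ref{main}, and much of Section 3.3; whereas the paper deliberately avoids presupposing the completed reals (this is its stated philosophical point, and its reason for not simply citing H\"older), paying for that purity with the heavier combinatorics of $\gk_n$, $d(x)$, and the weak order. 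So your proof is acceptable as mathematics, but it imports exactly the object the paper is trying to characterize from scratch.
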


Proving the above lemma will require some effort, and will occupy us for the rest of this section. We start with a definition. A weakly ordered semigroup $W$ is a set with a relation $\prec $ and an operation satisfying

\begin{quote}
(A1.)\ For every $x,y,z\in W$, if $x\prec y$ and $y\prec z$ then $x\prec z$.
\end{quote}

\begin{quote}
(A2.)\ For every $x,y\in W$, if $x\prec y$ then not $y\prec x$.
\end{quote}

\begin{quote}
(A3.)\ For every $x,y,z\in W$, if $x$ and $y$ are incomparable (so that neither $x\prec y$ nor $y\prec x$) and if $y$ and $z$ are incomparable, then $x$ and $z$ are incomparable.
\end{quote}

\begin{quote}
(A3.)\ For every $x,y,z\in W$, if $(xy)z = x(yz)$
\end{quote}

\begin{quote}
(A5.)\ For every $x,y,z\in W$, $x\prec y \iff xz\prec yz \iff zx\prec zy$.
\end{quote}

These axioms are a generalization of Definition 2.1. Many previous definition, such as non-anomalous and morphism, can be extended to the case of weak orders without change.

\begin{prop}\label{H0} Let $W$ be a weakly ordered semigroup which is non-anomalous. Then there exists a non-anomalous totally ordered semigroup $V$ and a morphism $q:W\ra V.$
\end{prop}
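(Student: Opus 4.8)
The plan is to build $V$ as the quotient of $W$ by its incomparability relation. Define $x\sim y$ to hold when $x$ and $y$ are incomparable, i.e.\ neither $x\prec y$ nor $y\prec x$. By (A2) no element satisfies $x\prec x$, so $\sim$ is reflexive; it is symmetric by definition; and (A3) is precisely the statement that $\sim$ is transitive. Hence $\sim$ is an equivalence relation, and I set $V = W/\!\sim$ with quotient map $q\colon W\ra V$, $q(x)=[x]$. The work is then to descend the operation and the order to $V$, and to check that the quotient is a non-anomalous TOS.

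First I would show $\sim$ is a congruence, so that the operation descends. If $xz\prec x'z$ then (A5) forces $x\prec x'$; so when $x\sim x'$ neither $xz\prec x'z$ nor $x'z\prec xz$ can hold, giving $xz\sim x'z$, and symmetrically $zx\sim zx'$. Combining these, $x\sim x'$ and $y\sim y'$ yield $xy\sim x'y\sim x'y'$, so $[x][y]:=[xy]$ is well defined, and associativity on $V$ is inherited directly from (A4).

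Next I would equip $V$ with the order $[x]\prec[y]$ iff $x\prec y$ and verify it is independent of representatives. Assuming $x\sim x'$, $y\sim y'$ and $x\prec y$, I rule out $x'\not\prec y$: if $y\prec x'$ then $x\prec x'$ by (A1), contradicting $x\sim x'$, while $x'\sim y$ combined with $x\sim x'$ gives $x\sim y$ by (A3), contradicting $x\prec y$; so $x'\prec y$, and the analogous argument against $y'$ gives $x'\prec y'$. Trichotomy on $V$ is then immediate, since for any $[x],[y]$ exactly one of $x\prec y$, $x\sim y$, $y\prec x$ holds by (A2). Compatibility of this order with the operation is just (A5) read on representatives, so $V$ is a totally ordered semigroup and $q$ is a morphism, both preserving and reflecting $\prec$ by construction.

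It finally remains to transfer non-anomalousness. Since $[x]^n=[x^n]$ and $[x]\prec[y]\iff x\prec y$, any anomalous pair $[x]\succ[y]$ in $V$ pulls back to the inequalities $x^n\prec y^{n+1}$ (or $y^n\succ x^{n+1}$) for all $n\in\bb N$ together with $x\succ y$, exhibiting $x,y$ as an anomalous pair in $W$ and contradicting the hypothesis. I expect the main obstacle to lie in the two middle steps: showing that incomparability is a congruence and that the induced order is well defined. Both hinge on using (A5) to push the operation through the quotient and, crucially, on (A3) — the transitivity of incomparability — to prevent two representatives of the same class from landing on opposite sides of a strict inequality. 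Once $\sim$ is known to be a congruence compatible with the order, the remaining verifications are purely formal.
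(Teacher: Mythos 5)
Your proposal is correct and follows essentially the same route as the paper: quotient $W$ by the incomparability relation, verify via (A3) and (A5) that it is a congruence compatible with the order, and pull anomalous pairs in $V$ back to $W$. You simply spell out the well-definedness checks that the paper leaves as ``straightforward.''
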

\begin{proof} Define the relation $x\sim y$ on $W$ if $x$ and $y$ are incomparable. From the axioms of a weakly ordered semigroup, it is straightforward to prove that for all $x,x',y\in W$,
$$x\prec y\text{ and } x\sim x' \implies x'\prec y, \ \ \ \  x\sim x' \implies xy\sim x'y $$
Now define $V = W/\sim$ and let $q$ be the quotient map. From the above two statements, it is clear that
$$[x][y] = [xy], \ \ \ [x]\prec [y] \text{ if } x\prec y$$
are independent the representative chosen, and that $V$ is a totally ordered semigroup. This implies that $q$ is a morphism.

We now prove that $V$ is non-anomalous. If $x,y\in W^+$ with $x\prec y$, then since $W$ is non anomalous there exists some $n\in \bb N$ so that $(n+1)x\prec n y$ and so $(n+1)[x]\prec n[y]$. Hence there are no positive anomalous pairs in $V$. An analogous argument holds for negative anomalous pairs.
\end{proof}

In light of Proposition \ref{H0}, to prove Lemma \ref{L31} we need simply to embed each family $(A_i,a_i)$ into a weakly ordered semigroup.

Given some $(A,a)$ in $\nas$, take $a$ to be positive and define a function $\bk_n: A\ra \bb Z$ by
$$\bk_n(x) = \begin{cases} 
     \text{sup}\{m\in\bb N\ |\ ma \leq 2^nx\}  & x\in A^+ \\
    -\text{sup}\{m\in\bb N\ |\ ma + 2^nx \in A^+\}   & \text{otherwise} \\
\end{cases} $$
This is well defined when $x\in A^+$ because $A$ is Archimedean. Since $A$ is non-anomalous, Lemma \ref{L33} then guarantees that it well defined when $x\not\in A^+.$

\begin{lemma}\label{L30} Given $(A,a)$ in $\nas$ and $\bk_n$ as defined above, we have
\begin{enumerate}
\item $\bk_n(a) = 2^n$
\item $2^k\bk_n(x)\leq \bk_{n+k}(x)< 2^k+2^k\bk_n(x).$
\item $\bk_n(x)+\bk_n(y)\leq\bk_n(x+y) \leq 1+\bk_n(x)+\bk_n(y).$
\item If $y < x$ then there exists an $n$ such that $\bk_n(y)+1<\bk_n(x).$
\end{enumerate}
\end{lemma}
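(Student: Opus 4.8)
The plan is to treat the four statements as increasingly substantive: (1) is bookkeeping, (2)--(3) are read off directly from the defining inequalities, and (4) is where the non-anomalous hypothesis is genuinely used. The organizing device I would introduce first is the \emph{shift identity} $\bk_n(x+a)=\bk_n(x)+2^n$, valid for every $x\in A$. For $x\in A^+$ it is immediate: the condition $ma\le 2^n(x+a)=2^nx+2^na$ becomes, after cancelling $2^na$, the condition $(m-2^n)a\le 2^nx$, so the supremum defining $\bk_n(x+a)$ is exactly $2^n$ more than the one defining $\bk_n(x)$. The mild chore is to confirm the same identity across the second branch of the definition; granting it, every statement except (1) reduces to the case $x,y\in A^+$, because a large enough multiple $La$ of the positive basepoint dominates any given element (an Archimedean consequence of Proposition \ref{P22}), so $x+La,\,y+La\in A^+$, and replacing $x,y$ by these shifts every term of (2)--(4) by matching amounts and preserves the order, leaving the inequalities unchanged.

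Property (1) follows from cancellativity, since $ma\le 2^na\iff m\le 2^n$, so the supremum is $2^n$. For (2) and (3) I would work from the two-sided bound $ma\le 2^nx<(m+1)a$ with $m=\bk_n(x)$. Multiplying through by $2^k$ gives $2^km\,a\le 2^{n+k}x<2^k(m+1)a=(2^km+2^k)a$, which says precisely $2^k\bk_n(x)\le\bk_{n+k}(x)<2^k+2^k\bk_n(x)$, i.e.\ (2). For (3), adding the bounds for $x$ and $y$ yields $(\bk_n(x)+\bk_n(y))a\le 2^n(x+y)<(\bk_n(x)+\bk_n(y)+2)a$; the left inequality forces $\bk_n(x+y)\ge\bk_n(x)+\bk_n(y)$ and the right forces $\bk_n(x+y)\le\bk_n(x)+\bk_n(y)+1$, the surplus $+1$ being the two combined ``fractional parts.'' Both extend to arbitrary $x,y$ by the shift reduction.

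The crux is (4), and I expect it to be the main obstacle. After reducing to $x,y\in A^+$ with $y<x$, the key point is that the pair $(x,y)$ \emph{cannot be anomalous}; since $x>y>0$ rules out the negative alternative, non-anomalousness means it is not the case that $nx<(n+1)y$ for all $n$, so there is an $N\in\bb N$ with $Nx\ge(N+1)y=Ny+y$. The difficulty is that $A$ has no subtraction, so I cannot divide by $N$ and rescale. Instead I would multiply the inequality $j$-fold to get $jNx\ge jNy+jy$, then use the Archimedean property (Proposition \ref{P22}) to choose $j$ with $jy>2a$. Writing $M=jN$ gives $Mx>My+2a$, and padding up to a power of two $2^n\ge M$ (applying $x>y$ to the surplus $2^n-M$ copies) yields $2^nx\ge 2^ny+2a$.

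It then remains to transfer this semigroup separation to the integers $\bk_n$. Setting $p=\bk_n(y)$ we have $pa\le 2^ny$, so $(p+2)a\le 2^ny+2a\le 2^nx$, and by definition of the supremum this forces $\bk_n(x)\ge p+2=\bk_n(y)+2$, which is the desired $\bk_n(y)+1<\bk_n(x)$. The whole point of (4) is that it converts the purely qualitative hypothesis $y<x$ into a quantitative dyadic gap, and the argument succeeds only because non-anomalousness forbids $x$ and $y$ from being infinitesimally close; the technical heart is replacing the missing subtraction by the ``multiply the inequality, then apply Archimedes'' maneuver.
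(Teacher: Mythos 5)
Your proposal is correct and follows essentially the same route as the paper: the shift identity $\bk_n(x+a)=\bk_n(x)+2^n$ to reduce everything to positive elements, direct manipulation of the defining bounds $\bk_n(x)a\le 2^nx<(\bk_n(x)+1)a$ for parts (2) and (3), and the combination of non-anomalousness of the pair with the Archimedean property for part (4). If anything your part (4) is slightly tighter: by arranging $jy>2a$ you secure the full gap $\bk_n(x)\ge\bk_n(y)+2$ that the statement requires, whereas the paper (which takes only $a<2^ky$) displays the weaker conclusion $\bk_{k+l}(y)+1\le\bk_{k+l}(x)$.
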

\begin{proof}
From the definition of $\bk_n$ the first part of the lemma follows trivially:
$$\bk_n(a) = 2^n.$$
Furthermore notice that
$$\bk_n(x+a) = \bk_n(x)+2^n.$$
Lemma \ref{L33} states for any $w,z\in A$, $z+ka$ and $w+ka$ are in $A^+$ for some $k\in\bb N$. As a consequence, if we can prove properties 2 through 4 for positive $x$ and $y$, then it will follow that they hold for all $x$ and $y.$

So take $x\in A^+$. Then $\bk_n(x)a\leq 2^nx < (\bk_n(x)+1)a$ and so 
$$2^k\bk_n(x)a\leq 2^{n+k}x <2^k(\bk_n(x)+1)a.$$
From the definition of $\bk_n$ this implies that
$$2^k\bk_n(x)\leq \bk_{n+k}(x)< 2^k+2^k\bk_n(x).$$

Now take $x,y\in A^+$. Then as $\bk_n(x)a\leq 2^nx$ and $\bk_n(y)a\leq 2^ny$, we see that
$$(\bk_n(x)+\bk_n(y))a\leq 2^n(x+y)$$
and so $\bk_n(x)+\bk_n(y)\leq \bk_n(x+y).$ Conversely, since $(\bk_n(x)+1)a> 2^n x$ and $(\bk_n(y)+1)a> 2^n y $, we find that
$$(\bk_n(x)+\bk_n(y)+2)a > 2^n(x+y)$$
and so $\bk_n(x)+\bk_n(y)+ 2 > \bk_n(x+y).$ This proves the third part of the lemma.

To prove the fourth property, take $x,y\in A^+$ with $y<x$. Since $A$ is Archimedean there exists an $2^k$ so that $a<2^ky.$ Furthermore, since $2^kx$ and $2^ky$ are not an anomalous pair, there exists some $2^l$ so that
$$2^{k+l}y+a<(2^l+1)(2^ky)<2^{k+l}x$$
It hence follows that $\bk_{k+l}(y) +1\leq \bk_{k+l}(x)$, and this completes the proof. \end{proof}

Given a family of objects $(A_i,a_i)$ in $\nas$, take the coproduct of $A_i$ as abelian semigroups. So we have $B = \bigoplus_{i\in I} A_i$ along with injective homomorphisms $p_i:A_i\ra B.$ Elements of $B$ are just finite formal sums of elements in $A_i$. Hence for each $x\in B$ there exists a unique finite set $I_x$ so that for each $i\in I$, $I_x\cap p_i(A_i)$ has at most one element, and with
$$x = \sum_{p_i(x_i)\in I_x} p_i(x_i).$$
We will denote $d(x) = \text{card}(I_x)$. Furthermore, define $\gk_n:B\ra \bb Z$ by
$$\gk_n(x) = \sum_{p_i(x_i)\in I_x} \bk^{i}_n(x_i).$$
Using part 2 of Lemma \ref{L30} we find that
\begin{equation}\label{l1}2^k\gk_{n}(x)\leq \gk_{n+k}(x) < 2^k\gk_{n}(x) + 2^kd(x).\end{equation}
We now define a relation on $B$, writing $x\prec y$ if there exists an $n\in\bb N$ so that
\begin{equation}\label{l3}\gk_n(x)+d(x)+1\leq \gk_n(y).\end{equation}
Furthermore, write $x\sim y$ if $x$ and $y$ are incomparable.

\begin{lemma}\label{H1} The relation $x\prec y$ holds if and only if for any $M\in\bb N$ there exists an $N\in\bb N$ so that for all $n>N$, $\gk_{n}(x)+M<\gk_{n}(y).$\end{lemma}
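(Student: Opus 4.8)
The plan is to deduce both implications directly from the scaling estimate \eqref{l1}, which is the essential input. The definition \eqref{l3} of $\prec$ carries one built-in unit of slack (the ``$+d(x)+1$''), and the whole argument is about tracking how that slack behaves under doubling of the index $n$.

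I would dispose of the reverse implication first, since it is immediate. Assuming the right-hand condition, apply it with the specific value $M = d(x)+1\in\bb N$. This produces an $N$ with $\gk_n(x)+d(x)+1<\gk_n(y)$ for every $n>N$; selecting any single such $n$ gives $\gk_n(x)+d(x)+1\le\gk_n(y)$, which is precisely \eqref{l3}, so $x\prec y$.

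For the forward implication I would start from \eqref{l3}: there is an $n_0$ with $\gk_{n_0}(y)-\gk_{n_0}(x)\ge d(x)+1$. The idea is that a single fixed positive gap at level $n_0$ is amplified geometrically as $n$ grows. Applying the lower bound of \eqref{l1} to $y$ and the upper bound to $x$ yields, for every $k\in\bb N$,
$$\gk_{n_0+k}(y)-\gk_{n_0+k}(x) > 2^k\bigl(\gk_{n_0}(y)-\gk_{n_0}(x)-d(x)\bigr)\ge 2^k.$$
Hence, writing $n=n_0+k$, the gap $\gk_n(y)-\gk_n(x)$ exceeds $2^{\,n-n_0}$. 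Now, given any $M\in\bb N$, choosing $N$ with $2^{\,N-n_0}>M$ forces $\gk_n(y)-\gk_n(x)>M$ for every $n>N$ (note $N>n_0$ automatically), which is exactly the desired statement.

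The only real obstacle is matching the two inequalities in \eqref{l1} to the correct elements: one must use the lower bound $2^k\gk_{n_0}(y)\le\gk_{n_0+k}(y)$ for $y$ and the upper bound $\gk_{n_0+k}(x)<2^k\gk_{n_0}(x)+2^kd(x)$ for $x$, so that the error term $2^kd(x)$ is absorbed exactly by the $-d(x)$ coming from the definition of $\prec$. Once the bounds are aligned this way, the positive integer $\gk_{n_0}(y)-\gk_{n_0}(x)-d(x)\ge 1$ survives the scaling and the geometric growth of $2^k$ does the rest; everything else is routine bookkeeping with integers.
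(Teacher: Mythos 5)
Your proof is correct and follows essentially the same route as the paper: the reverse direction is the identical specialization $M=d(x)+1$, and your forward direction is the paper's chain $\gk_{n+k}(x)+2^k\leq 2^k\gk_n(x)+2^kd(x)+2^k\leq 2^k\gk_n(y)\leq\gk_{n+k}(y)$ rewritten as a difference of the two sides. The bookkeeping with $2^k$ absorbing the $d(x)$ error term matches the paper exactly.
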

\begin{proof} To prove the forward direction, if \eqref{l3} holds for some $n$, then applying \eqref{l1},
$$\gk_{n+k}(x)+2^k\leq 2^k\gk_n(x)+2^kd(x)+2^k\leq 2^k\gk_{n+k}(y)\leq\gk_{n+k}(y).$$
As $2^k$ grows without bound, this implies that for any $M\in\bb N$ there exists an $N\in\bb N$ so that for all $n>N$,
\begin{equation}\label{l4}\gk_{n}(x)+M<\gk_{n}(y).\end{equation}
The backward direction follows from substituting $M = d(x)+1$ into the above equation.
\end{proof}

\begin{lemma}\label{H2} For $x,y\in B,$ $x\prec y\iff x+z\prec y+z$.\end{lemma}
\begin{proof} Using part 3 of Lemma \ref{L30}, we find that
\begin{equation}\label{l2}\gk_n(x)+\gk_n(y)\leq\gk_n(x+y) = \sum_{p_i(w)\in I_{x+y}}\bk_n^i(w) \leq \gk_n(x)+\gk_n(y)+d(x)\end{equation}
since there are at most $d(x)$ elements of $I_{x+y}$ which are the sum of an element in $I_x$ and $I_y.$ If $x\prec y$, from Lemma \ref{H1} we know there exists an $n$ such that
$$\gk_n(x)+2d(x)<\gk_n(y).$$
Now applying \eqref{l2},
$$\gk_n(x+z)+d(x)\leq \gk_n(x)+\gk_n(z)+2d(x)<\gk_n(y)+\gk_n(z)\leq \gk_n(y+z),$$
and so $x+z\prec y+z.$

Conversely, if $x+z\prec y+z$ then from \eqref{l3} there exists an $n$ such that
$$\gk_n(x+z)+d(x)+d(y+z)<\gk_n(y+z).$$
So applying \eqref{l2},
$$\gk_n(x)+\gk_n(z)+d(x)+d(y+z)\leq\gk_n(x+z)+d(x)+d(y+z)$$ $$<\gk_n(y+z) \leq \gk_n(y)+\gk_n(z)+d(y+z).$$
Canceling terms on both sides, this simplifies to $\gk_n(x)+d(x) < \gk_n(y)$
and hence $x\prec y.$
\end{proof}

\begin{lemma}\label{H32} The semigroup $B$ is a weakly ordered semigroup, with $\prec$ as the order. The maps $p_i$ are morphisms with $p_i(a_i) \sim p_j(a_j)$, and $B$ is non-anomalous.
\end{lemma}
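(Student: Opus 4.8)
The plan is to first distill from the amplification inequality \eqref{l1} a clean description of both $\prec$ and its associated incomparability $\sim$, and then read off every clause of the lemma from it. Concretely, I would record the two facts that, combined with \eqref{l1}, do all the work: (i) $x\prec y$ if and only if $\gk_n(y)-\gk_n(x)\to+\infty$, which is exactly Lemma \ref{H1}; and (ii) $x\sim y$ if and only if the integer sequence $\gk_n(y)-\gk_n(x)$ is bounded. The forward direction of (ii) is the crucial observation: applying \eqref{l1} gives $\gk_{n+k}(y)-\gk_{n+k}(x)>2^k(\gk_n(y)-\gk_n(x)-d(x))$, so a single index $n$ with $\gk_n(y)-\gk_n(x)>d(x)$ already forces $x\prec y$; hence if $x\not\prec y$ and $y\not\prec x$ then $-d(y)\le \gk_n(y)-\gk_n(x)\le d(x)$ for every $n$, i.e.\ the difference is bounded. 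The reverse direction of (ii) is immediate, since a bounded sequence cannot tend to $\pm\infty$.

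With (i) and (ii) in hand, the weakly ordered semigroup axioms are routine. Associativity holds because $B$ is a coproduct of (associative, commutative) semigroups, and axiom (A5) is precisely Lemma \ref{H2} together with commutativity of $B$. For (A1) and (A2) I would argue directly from (i): if $\gk_n(y)-\gk_n(x)\to\infty$ and $\gk_n(z)-\gk_n(y)\to\infty$ then $\gk_n(z)-\gk_n(x)\to\infty$, giving transitivity, and the same monotone divergence rules out $x\prec y$ and $y\prec x$ holding simultaneously, giving asymmetry. Axiom (A3), transitivity of incomparability, is the one clause where (ii) is genuinely needed: boundedness of $\gk_n(y)-\gk_n(x)$ and of $\gk_n(z)-\gk_n(y)$ yields boundedness of their sum $\gk_n(z)-\gk_n(x)$ by the triangle inequality, whence $x\sim z$.

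For the maps $p_i$, note that $p_i(x)$ has support a single summand, so $\gk_n(p_i(x))=\bk^i_n(x)$ and $d(p_i(x))=1$; thus $p_i(x)\prec p_i(y)$ unwinds to $\bk^i_n(x)+2\le\bk^i_n(y)$ for some $n$. Part 4 of Lemma \ref{L30} supplies exactly such an $n$ whenever $x<y$, so $p_i$ preserves order; the converse, and hence the order-embedding property, follows from trichotomy in $A_i$ (if $x=y$ the images are equal, and if $x>y$ the same argument gives $p_i(y)\prec p_i(x)$, each ruled out by $p_i(x)\prec p_i(y)$ via (A2)). Since $p_i$ is a semigroup homomorphism, and injective by construction, it is a morphism. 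That $p_i(a_i)\sim p_j(a_j)$ is then immediate from part 1 of Lemma \ref{L30}: both have $\gk_n$-value $2^n$, so their difference is identically zero and in particular bounded.

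The hard part will be non-anomalousness, and here I would exploit the growth-rate information in \eqref{l1} more quantitatively. Suppose a positive anomalous pair existed, say $y\prec x$ with $nx\prec(n+1)y$ for all $n\in\bb N$. From $y\prec x$ there is an index $n_0$ with $\gk_{n_0}(x)-\gk_{n_0}(y)\ge d(y)+1$, and amplifying by \eqref{l1} gives $\gk_{n_0+k}(x)-\gk_{n_0+k}(y)>2^k$ while $\gk_{n_0+k}(y)<2^k(\gk_{n_0}(y)+d(y))$. Feeding these into the iterated additive bounds $\gk_m(Nx)\ge N\gk_m(x)$ and $\gk_m((N+1)y)\le (N+1)\gk_m(y)+Nd(y)$, obtained by iterating \eqref{l2} together with $d(ky)\le d(y)$, I get, at $m=n_0+k$,
$$\gk_m((N+1)y)-\gk_m(Nx)\le 2^k\bigl(\gk_{n_0}(y)+d(y)-N\bigr)+Nd(y).$$
Choosing $N>\gk_{n_0}(y)+d(y)$ makes the bracket negative, so this difference tends to $-\infty$ as $k\to\infty$; by (i) this means $Nx\not\prec(N+1)y$, contradicting the anomalous hypothesis. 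The negative case is dual. The only delicate point is getting the iterated additive estimate with the correct $d$-dependence, but this is a clean induction on $N$ from \eqref{l2}, and the resulting choice of $N$ is exactly what breaks the infinitesimal comparison.
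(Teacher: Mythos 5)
Your proposal is correct and follows essentially the same route as the paper: it rests on the same ingredients (Lemma \ref{H1} for $\prec$, the bound $-d(y)\le\gk_n(y)-\gk_n(x)\le d(x)$ for $\sim$, Lemma \ref{H2} for translation-invariance, parts 1 and 4 of Lemma \ref{L30} for the $p_i$, and the iterated form of \eqref{l2} for non-anomalousness). The only difference is organizational: the paper establishes $mx\prec(m-1)y$ directly at a single index $n$ by choosing $m>\gk_n(y)$, whereas you fix the multiplier $N$ and let the index tend to infinity in a proof by contradiction --- the underlying computation is the same.
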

\begin{proof}
We start by proving the first part of the lemma. Both axiom 1 in Definition 2.1 and (A1) follows from a straightforward application of Lemma \ref{H1}.

To prove (A2), we need to show that $\sim$ is transitive. If $x\sim y$ then this requires that for every $n,$ \eqref{l3} does not hold, and so 
$$\gk_n(x)+d(x)>\gk_n(y), \ \ \ \gk_n(y)+d(y)>\gk_n(x).$$
Combining these inequalities, we find that if $x\sim y$, then for every $n\in\bb N$,
\begin{equation}\label{l5}\gk_n(y)<\gk_n(x)+d(y)+d(x).\end{equation}
We can now prove that $\sim$ is transitive. Let $x\sim y$ and $y\sim z$, then
$$\gk_n(z) < \gk_n(y)+d(y)+d(z) <\gk_n(x)+d(x)+d(y)+d(z) $$
$$\gk_n(x) < \gk_n(y)+d(y)+d(x) <\gk_n(z)+d(x)+d(y)+d(z).$$
So by Lemma \ref{H1}, this implies that neither $x\prec z$ nor $z\prec x$, so $x\sim z$. Hence (A2) is satisfied.

By definition $B$ is an abelian semigroup, and in particular satisfies axiom 3 in Definition 2.1. Using Lemma \ref{H2}, we then find that axiom 4 is satisfied, so $B$ is a weakly ordered abelian semigroup.
\\

Given $b\in A_i$ we find that $\gk_n(p_i(b)) = \bk_n^i(b)$. Since $d(p_i(b)) = 1$ applying the fourth part of Lemma \ref{L30}, we find that for $b,c\in A_i$, if $b<c$ then $p_i(b)\prec p_i(c).$ So $p_i$ is a morphism from $A_i$ to $B$.

From the first part of Lemma \ref{L30}, $\gk_n(p_i(a_i)) = \bk_n^i(a_i) = 2^n$. So $\gk_n(p_i(a_i)) = \gk_n(p_j(a_j))$ and therefore $p_i(a_i)\not\prec p_j(a_j)$ for all $i,j\in I$. Hence $p_i(a_i) \sim p_j(a_j)$.
\\

Our last task is to show that $B$ is non-anomalous. Take $x,y\in B$ with $x\prec y$. Assume $x$ and $y$ are positive. From Lemma \ref{H1} there exists some $n$ so that
$$\gk_n(x)+d(x)+1 < \gk_n(y).$$
Take some $m\in\bb N^+$ with $m>\gk_n(y)$. Then multiplying the above inequality by $m,$
\begin{equation}\label{l6}m(\gk_n(x)+2d(x))+m<m\gk_n(y) \implies m(\gk_n(x)+2d(x))<(m-1)\gk_n(y).\end{equation}
If we repeatedly apply \eqref{l2} to $mz$ for any $z\in B$, we find that
$$m\gk_n(z) \leq \gk_n(mz) \leq m(\gk_n(z)+d(z)).$$
In particular, applying this to \eqref{l6},
$$\gk_n(mx)+md(x)\leq m(\gk_n(x)+2d(x)) < (m-1)\gk_n(y)\leq\gk_n((m-1)y).$$
Since $d(mx) = d(x)$ and since $m>0$, this implies that $mx\prec (m-1)y$. An analogous argument holds for the case where $x$ and $y$ are negative.
\end{proof}

We are finally in a position to prove Lemma \ref{L31}:

\begin{proof}[Proof (Lemma \ref{L31})] By Lemma \ref{H32}, any family $(A_i,a_i)$ embeds into some non-anomalous weakly ordered semigroup $B$ with morphisms $p_i:A_i\ra B$, and furthermore, $p_i(a_i) \sim p_j(a_j)$. Applying Proposition \ref{H0}, there is a non-anomalous semigroup $U$ and a morphism $q:B\ra U$. So the maps $q\circ p_i$ are morphisms from $A_i$ to $U.$ Furthermore, since $p_i(a_i) \sim p_j(a_j)$ we see that $q\circ p_i(a_i) = q\circ p_j(a_j) = u$ for every $i,j\in I$. This completes the proof.
\end{proof}

\subsection{Bicompleteness of $\nas$}
With the proof of Lemma \ref{L30} complete, we are know free to prove the bicompleteness of $\nas$. This in particular means that $\nas$ has a terminal object, which we shall denote $(\bb R,r).$ We will then show that $\nas$ can be understood entirely in terms of the additive structure on $\bb R$, and as a consequence is essentially small.

\begin{thm}\label{T32} $\nas$ is cocomplete with initial object $(\bb N,1)$.
\end{thm}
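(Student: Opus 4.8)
The plan is to lean on the fact that $\nas$ is thin (Theorem \ref{T31}) to reduce cocompleteness to two manageable pieces. In a thin category any two parallel arrows coincide, so the coequalizer of $f,g\colon X\ra Y$ is just $Y$ with its identity; coequalizers therefore always exist. By the standard criterion, a category possessing all small coproducts (including the empty one) together with all coequalizers is cocomplete. So the whole theorem reduces to producing the empty coproduct (the initial object) and all nonempty coproducts.

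I would dispatch the initial object directly, noting it is the empty coproduct. Given any $(A,a)$ with $a$ positive (permissible by duality), the map $n\mapsto na$ is a group homomorphism $\bb N\ra A$, and it is strictly monotone because $a\in A^+$ (Proposition \ref{P21}), hence a morphism sending $1\mapsto a$. Thinness makes it the unique arrow $(\bb N,1)\ra(A,a)$, so $(\bb N,1)$ is initial; here $\bb N$ is non-anomalous and $1$ is a non-identity, so $(\bb N,1)\in\nas$.

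For a nonempty family $(A_i,a_i)_{i\in I}$ I would assemble the coproduct from the apparatus already built. Form $B=\bigoplus_i A_i$ with the weak order $\prec$ of Lemma \ref{H32}; by Proposition \ref{H0} the quotient $q\colon B\ra U=B/\!\sim$ is a non-anomalous totally ordered semigroup, and the composites $f_i=q\circ p_i$ are morphisms $A_i\ra U$ all carrying $a_i$ to a common non-identity $u$ (since $p_i(a_i)\sim p_j(a_j)$). I claim $(U,u)$ is the coproduct. Note that $U$ is generated as a semigroup by $\bigcup_i f_i(A_i)$, since $B$ is generated by the $p_i(A_i)$. Given a cocone $g_i\colon(A_i,a_i)\ra(V,v)$ (all $g_i$ are automatically positive morphisms, as they fix the positive basepoint $v$), the universal property of the coproduct of abelian semigroups yields a unique homomorphism $\yk\colon B\ra V$ with $\yk\circ p_i=g_i$. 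The task is to show $\yk$ descends through $q$ to a morphism $(U,u)\ra(V,v)$; thinness then supplies both uniqueness and the universal property, with $\bar\yk\circ f_i=g_i$ and $\bar\yk(u)=v$.

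The main obstacle is exactly this descent: proving that $\yk$ is a morphism of weakly ordered semigroups, i.e.\ that $x\prec y\Rightarrow\yk(x)<\yk(y)$ and $x\sim y\Rightarrow\yk(x)=\yk(y)$. The engine is the estimate obtained by applying each order-embedding $g_i$ to $\bk^i_n(x_i)a_i\leq 2^n x_i<(\bk^i_n(x_i)+1)a_i$ and summing over $I_x$, namely $\gk_n(x)\,v\leq 2^n\yk(x)<(\gk_n(x)+d(x))\,v$ for positive $x$ (the general case reducing to positive elements via the shift trick used in Lemma \ref{L30}). The first implication is then clean: by Lemma \ref{H1} there is an $n$ with $\gk_n(x)+d(x)\leq\gk_n(y)$, whence $2^n\yk(x)<(\gk_n(x)+d(x))v\leq\gk_n(y)\,v\leq 2^n\yk(y)$ and so $\yk(x)<\yk(y)$. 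The delicate implication is $x\sim y\Rightarrow\yk(x)=\yk(y)$, and this is precisely where the non-anomalousness of the \emph{target} $V$ is indispensable: $x\sim y$ says $x$ and $y$ have equal value (bounded $|\gk_n(x)-\gk_n(y)|$ for all $n$), and one must preclude $V$ separating two value-equal images. I expect to argue by contradiction, assuming $\yk(x)<\yk(y)$ and using the bounded $\gk_n$-gap to show $\yk(x)$ and $\yk(y)$ would be distinct yet infinitesimally close, contradicting that $V$ has no anomalous pair; the Archimedean property of $V$ (Proposition \ref{P22}) is what converts a bounded multiplicative gap into outright equality. Getting this conversion right, rather than merely bounding the gap, is the crux of the argument. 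Once $\yk$ is shown to be a morphism it factors as $\bar\yk\circ q$, establishing the universal property of $(U,u)$ and hence the cocompleteness of $\nas$.
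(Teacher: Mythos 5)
Your route is genuinely different from the paper's. The paper never verifies the universal property of the coproduct by an order-theoretic computation: after Lemma \ref{L31} produces a common target $(U,u)$, it passes to $\overline U$, the smallest subsemigroup of $U$ containing all the images $f_i(A_i)$, and then compares $\overline U$ with an arbitrary competitor $(V,v)$ by invoking Lemma \ref{L31} a \emph{second} time to embed $U$ and $V$ jointly into some $(W,w)$; thinness (Proposition \ref{P31}) then forces $\overline U\approx\overline W\approx\overline V$, and the universal property follows purely formally. You instead re-enter the construction behind Lemma \ref{L31} and verify the universal property of $(U,u)=(B/\!\sim,\,u)$ directly, by descending the canonical homomorphism $\yk\colon B\ra V$ through the quotient. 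Your reduction of cocompleteness to coproducts via trivial coequalizers, and your treatment of the initial object, agree with the paper in substance. What your approach buys is an explicit description of the coproduct; what it costs is exactly the step you yourself flag as the crux and do not carry out: that $x\sim y$ forces $\yk(x)=\yk(y)$. That step does close, along the lines you indicate. From your estimate $\gk_n(z)v\leq 2^n\yk(z)<(\gk_n(z)+d(z))v$ for positive $z$, suppose $\yk(x)<\yk(y)$; since $V$ is non-anomalous there is an $m$ with $(m+1)\yk(x)\leq m\yk(y)$, and combining $(m+1)\gk_n(x)v\leq 2^n(m+1)\yk(x)\leq 2^nm\yk(y)<m(\gk_n(y)+d(y))v$ with the bound $\gk_n(y)<\gk_n(x)+d(x)+d(y)$ coming from $x\sim y$ yields $\gk_n(x)<m(d(x)+2d(y))$ for every $n$, contradicting the unbounded growth of $\gk_n(x)$ for positive $x$ guaranteed by part 2 of Lemma \ref{L30}. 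Until that computation (together with the reduction to positive elements by shifting with multiples of the basepoint, and the check that $\yk(x)$ is positive in $V$) is written out, your proof is incomplete at its central point, whereas the paper's double application of Lemma \ref{L31} sidesteps the issue entirely.
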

\begin{proof} Take a family $(A_i,a_i)$ of objects in $\nas$ indexed by a set $I$. From Lemma \ref{L31} there exists a $(U,u)$ so that there are morphisms $f_i:(A_i,a_i)\ra(U,u)$. Let $U_j$ be the set of subsemigroups of $U$ which contain every $f_i(A_i),$ and define $\overline U = \bigcap_{j}U_j$. Then $\overline U$ is a non-anomalous subsemigroup of $U,$ with the universal property
\[\begin{tikzcd}
(A_i,a_i)\arrow{r}\arrow{rd} & (U_j,u)\arrow{r} & (U,u) \\
 & (\overline U,u)\arrow[dashrightarrow]{u}\arrow{ru} & \\
\end{tikzcd}\]
Let $(V,v)$ be also such that there are morphisms $f_i:(A_i,a_i)\ra(V,v)$ for each $i\in I$. There then exists a $(\overline V,v)$ satisfying the above universal property for $V$. From Lemma \ref{L31}, there is some $(W,w)$ so that there exists morphisms $g_U:(U,u)\ra(W,w)$ and $g_V:(V,v)\ra(W,w)$. We also now have a $(\overline W,w)$ for $W.$
\[\begin{tikzcd}
 & (\overline U,u) \arrow{r}\arrow[dashrightarrow]{rd}  & (U,u)\arrow{rd} & \\
(A_i,a_i)\arrow{ru}\arrow{rd}\arrow{rr} & & (\overline W,w)\arrow{u}\arrow{d}\arrow[dashrightarrow]{ld}\arrow[dashrightarrow]{lu}\arrow{r}  & (W,w)\\
 & (\overline V,v) \arrow{r}\arrow[dashrightarrow]{ru}  & (V,v)\arrow{ru} & 
\end{tikzcd}\]
From the universal property of $(\overline W,w)$, we know that there is a map from $(\overline W,w)\ra(\overline U,u).$ But then there exists a map $(\overline W,w)\ra(U,u)$ and so by the universal property of $(\overline U,u)$ there is a map $(\overline U,u)\ra (\overline W,w).$ Since $\nas$ is thin, $(\overline U,u)\approx (\overline W,w)$. By symmetry, $(\overline V,v)\approx (\overline W,w)$ and so $(\overline V,v)\approx (\overline U,u).$ So fixing some $(U,u)$, we see that $(\overline U,u)$ is the coproduct of $(A_i,a_i)$:
\[\begin{tikzcd}
(A_i,a_i)\arrow{r}\arrow{rd} & (\overline U,u)\approx (\overline V,v) \arrow[dashrightarrow]{d} \\
 & (V,v)
\end{tikzcd}\]
Because $\nas$ is thin, all diagrams automatically commute. Since $\nas$ has arbitrary coproducts, any non-empty small diagram has a colimit. 

All that remains now is to prove $\nas$ has an initial object. Let $(A,a)\in\text{ob}(\nas)$. We can then define the map $f:(\bb N,1)\ra(A,a)$ by $f(n) = n a$. Since $\nas$ is thin $f$ is unique and hence $(\bb N,1)$ is the initial object.\end{proof}

We will denote the coproduct on $\nas$ by $\oplus$. Notice that $\oplus$ is idempotent: $$(A,a)\oplus(A,a)\approx (A,a)$$
This is a general property of thin categories, following from the fact that in the diagram
\[\begin{tikzcd}
(A,a)\arrow{r}\arrow{rd}{f} & (A,a)\arrow[dashrightarrow]{d} & (A,a)\arrow{l}\\
& (B,b) \arrow[leftarrow]{ru}{g}&
\end{tikzcd}\]
the morphisms $f$ and $g$ must be equal.

We will now show that $\nas$ is a complete category, and in particular has a terminal object. This requires a couple of lemmas. These act to constrain the size of $\nas.$

We will say that an object $(A,a)$ is \emph{elementary} if $A$ is generated by $a$ and some other element $b$; that is $A = \{ma+nb\ |\ m,n\in\bb N\}.$ In other words, $A$ is a rank-2 semigroup pointed by one of its generators.

\begin{lemma}\label{L32} The class of isomorphism classes of elementary semigroups form a set.\end{lemma}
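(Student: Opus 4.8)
The plan is to produce an explicit injection from the class of isomorphism classes of elementary semigroups into a single fixed set, so that the class is a set by replacement. The only obstruction to being a set is the freedom in the underlying carrier; but an elementary $(A,a)$ is a quotient of the \emph{countable} set of formal combinations of its two generators, so all of its order-and-addition structure can be recorded as a subset of a fixed countable square. I would first pass to \emph{bi-pointed} objects, recording a chosen second generator, and forget it at the very end.

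Concretely, set $D = \{(m,n)\in\bb N^2 : m+n\geq 1\}$, a countable set indexing the nonempty formal sums $ma+nb$. For an elementary $(A,a)$ together with a choice of second generator $b$ (so that $A=\{ma+nb : (m,n)\in D\}$), define
$$R_{(A,a,b)} = \{\,((m,n),(m',n'))\in D\times D : ma+nb \leq m'a+n'b \text{ in } A\,\}\subseteq D\times D.$$
Since $\leq$ is the total order of the TOS, $R$ records all order relations, and in particular all coincidences $ma+nb=m'a+n'b$ (those pairs related in both directions). The assignment $(A,a,b)\mapsto R_{(A,a,b)}$ is constant on isomorphism classes, as any morphism preserves the operation, the order, and the basepoint.

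Next I would show this assignment is injective on isomorphism classes of bi-pointed objects. Suppose $R_{(A,a,b)} = R_{(A',a',b')}$, and define $\psi(ma+nb) = ma'+nb'$. The crucial point is \emph{well-definedness}: $ma+nb = m'a+n'b$ holds exactly when $(m,n)$ and $(m',n')$ are $R$-related in both directions, and since the two recorded relations coincide, the identical collapsing occurs in $A'$; here trichotomy and cancellativity of a TOS are what let equality be read off from $\leq$. Then $\psi$ is a bijection (surjective because $b'$ generates $A'$), a homomorphism (by commutativity, addition in $A$ and $A'$ is coordinatewise in $D$), and an order isomorphism (immediately from $R_{(A,a,b)}=R_{(A',a',b')}$), and it sends $a=1a+0b$ to $a'$. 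Hence $\psi$ is an isomorphism in $\nas$, so $R$ determines the bi-pointed object up to isomorphism.

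Finally, $\mathcal{P}(D\times D)$ is a set, so the injection exhibits the isomorphism classes of bi-pointed elementary objects as a set; forgetting the second generator gives a surjection onto the isomorphism classes of elementary $(A,a)$, and a surjective image of a set is a set. I expect the only genuinely delicate step to be the well-definedness of the reconstruction $\psi$, namely verifying that agreement of the two recorded preorders forces exactly the same identifications among formal sums; the homomorphism, order, and basepoint properties are then pure bookkeeping. As an independent sanity check one may note that Lemma \ref{L30}(4) makes $x\mapsto(\bk_n(x))_n$ an injection $A\hookrightarrow \bb Z^{\bb N}$, so each such $A$ already has cardinality at most $2^{\aleph_0}$.
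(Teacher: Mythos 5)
Your proposal is correct and is essentially the paper's own argument: the paper likewise fixes a second generator $b$ and encodes the order relations among the formal sums $ma+nb$ into a fixed set (a function $\bb N^4\to\{0,1,2\}$ rather than your subset of $D\times D$), then reads off an isomorphism whenever two encodings agree. Your version is slightly more scrupulous about the well-definedness of the reconstruction map and about discarding the auxiliary generator at the end, but the route is the same.
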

\begin{proof} 
Take some elementary semigroup $(A,a)$ with $a$ to be positive and choose some $b\in A$ so that $a$ and $b$ generate $A$. We define the function $f_{A,a,b}:\bb N^4\ra \bb N$ by
$$f_{A,a,b}(m,n,p,q) = \begin{cases} 
      2 & \text{ if } ma+nb>pa+qb \\
      1 & \text{ if } ma+nb=pa+qb\\
      0 & \text{ if } ma+nb<pa+qb 
   \end{cases}.$$
Given some other elementary semigroup $(B,\ak)$ generated by $\ak$ and $\bk$, if $f_{B,\ak,\bk} = f_{A,a,b}$ then it is manifest that the function $g:A\ra B$ via
$$g(ma+nb) = m\ak+n\bk $$
is an isomorphism. Since the function from $\bb N^4\ra \bb N$ form a set, this implies that the class of isomorphism classes of elementary semigroups form a set.
\end{proof}

\begin{lemma}\label{L33} Any object $(A,a)$ in $\nas$ is isomorphic to the coproduct of elementary objects.\end{lemma}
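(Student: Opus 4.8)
The plan is to exhibit $(A,a)$ as the coproduct of its rank-$2$ pointed subsemigroups. Take $a$ positive, and for each $b\in A\setminus\langle a\rangle$ let $A_b=\langle a,b\rangle$ denote the subsemigroup of $A$ generated by $a$ and $b$. Any anomalous pair in $A_b$ would remain anomalous in $A$, so each $A_b$ is non-anomalous; since $a$ is a non-identity, $(A_b,a)$ is an elementary object, and the inclusion $j_b:(A_b,a)\ra(A,a)$ is a positive morphism. (If $A=\langle a\rangle$ this family is empty and $(A,a)\approx(\bb N,1)$ is already initial; otherwise $a\in A_b$ for every $b$, so nothing is lost by omitting the rank-$1$ subsemigroups.) We thus obtain a family $\{(A_b,a)\}$ of elementary objects equipped with a cocone $\{j_b\}$ to $(A,a)$.

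By Theorem \ref{T32}, $\nas$ is cocomplete, so the coproduct $(\overline U,u)=\bigoplus_b (A_b,a)$ exists; write $f_b:(A_b,a)\ra(\overline U,u)$ for its injections. The universal property applied to the cocone $\{j_b\}$ produces a morphism $\pi:(\overline U,u)\ra(A,a)$, and because $\nas$ is thin the relevant triangles commute by Proposition \ref{P31}(1), giving $\pi\circ f_b=j_b$ for every $b$.

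It remains to verify that $\pi$ is an isomorphism. Injectivity is automatic, as every morphism in $\nas$ is injective. For surjectivity, note that for each $b$ we have $b=j_b(b)=\pi(f_b(b))$, so the image of $\pi$ is a subsemigroup of $A$ containing $a=\pi(u)$ and every $b\in A\setminus\langle a\rangle$, hence all of $A$. Finally, a bijective positive morphism is an isomorphism, since such a morphism reflects as well as preserves order (if $f(x)<f(y)$ then the alternatives $x=y$ and $x>y$ are excluded), so its set-theoretic inverse is again an order-preserving homomorphism. Therefore $(A,a)\approx\bigoplus_b(A_b,a)$, a coproduct of elementary objects.

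The whole argument is formal once cocompleteness and thinness are invoked; the one genuinely load-bearing observation is the surjectivity of $\pi$, namely that each generator $b$ is reached through its own summand $A_b$. I anticipate no real obstacle beyond this bookkeeping together with the remark that a bijective morphism of totally ordered semigroups inverts to a morphism.
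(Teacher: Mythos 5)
Your proof is correct and follows essentially the same route as the paper: index the elementary subsemigroups $\langle a,b\rangle$ by the elements $b\in A$, form their coproduct, obtain the comparison morphism from the universal property, and conclude it is an isomorphism because it is surjective (each $b$ lies in the image of its own summand) and morphisms in $\nas$ are automatically injective. Your added remarks---that subsemigroups inherit non-anomalousness and that a bijective morphism inverts to a morphism---are details the paper leaves implicit, but the argument is the same.
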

\begin{proof} For each $b\in A$ there is an elementary non-anomalous subsemigroup of $A$ generated by $a$ and $b.$ Denote this object by $(E_b,a).$ Since $A$ is a set we can now take the coproduct of each $(E_b,a)$,
$$(C,c) = \bigoplus_{b\in B} (E_b,a).$$
From the universal property of the coproduct, we have the following diagram
\[\begin{tikzcd}
(E_b,a)\arrow{rd}{g_d}\arrow{dd}{f_b} \\
& (C,c)\arrow[dashrightarrow]{ld}{h} \\
(A,a) \\
\end{tikzcd}\]
Since for any $b\in A$ there exists an $E_b$ so $b$ is in the image of $f_b$, this means that $b$ is in the image of $h$ and hence $h$ is surjective. Morphisms are automatically injective, so $h$ is an isomorphism.
\end{proof}

\begin{thm}\label{main} The category $\nas$ is complete, and in particular has a terminal object denoted by $(\bb R,r).$
\end{thm}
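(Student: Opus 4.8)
The plan is to establish completeness by the standard route: a category is complete once it has all small products and all equalizers, and in a thin category equalizers are free. Indeed, given a parallel pair $f,g:(A,a)\ra(B,b)$, thinness forces $f=g$, so every morphism into $(A,a)$ equalizes them and $(A,a)$ is its own equalizer. Thus the entire burden falls on constructing small products; the terminal object will then emerge as the empty product.

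To build the product of a family $(A_i,a_i)_{i\in I}$, I would exploit the structural lemmas to reduce a potentially class-sized construction to a set-sized one. Let $\fcy E$ be a set of representatives for the isomorphism classes of elementary objects that admit a morphism to every $(A_i,a_i)$; this is a subcollection of the set furnished by Lemma \ref{L32}, hence a set. Since $\nas$ is cocomplete (Theorem \ref{T32}), the coproduct
$$(P,p) = \bigoplus_{E\in\fcy E} E$$
exists in $\nas$. Each $E\in\fcy E$ comes by construction with a morphism $E\ra(A_i,a_i)$, so the universal property of the coproduct supplies projection morphisms $\p_i:(P,p)\ra(A_i,a_i)$.

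It remains to verify the universal property, which is where Lemma \ref{L33} does the work. Suppose $(Q,q)$ admits morphisms $(Q,q)\ra(A_i,a_i)$ for every $i$. By Lemma \ref{L33}, $(Q,q)$ is the coproduct of elementary objects $(F_j,q)$, and each inclusion $(F_j,q)\ra(Q,q)$ composed with a projection yields a morphism $(F_j,q)\ra(A_i,a_i)$ for every $i$. Hence each $F_j$ lies, up to isomorphism, in $\fcy E$, and so maps into $(P,p)$; assembling these through the coproduct $(Q,q)=\bigoplus_j (F_j,q)$ produces a morphism $(Q,q)\ra(P,p)$. Thinness (Proposition \ref{P31}) makes the relevant triangles commute and renders this morphism unique, so $(P,p)$ is the product. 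Taking $I=\emptyset$, the defining condition on $\fcy E$ becomes vacuous, every elementary object belongs to $\fcy E$, and $\bigoplus_{\text{elementary }E} E$ receives a morphism from every object by the same decomposition argument; this weakly terminal object is, by thinness, terminal, and we christen it $(\bb R,r)$.

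The only genuine obstacle is the size issue: the lower bounds of a family form a proper class in general, so one cannot naively take their colimit to manufacture the product. The crux is therefore the interplay of Lemmas \ref{L32} and \ref{L33}: the former confines the elementary objects to a set, the latter shows that elementary objects generate everything under coproducts, and together they allow a set-indexed coproduct to stand in for the class-sized join. Everything else — commutativity, uniqueness, and the existence of the projections — is automatic from thinness and from the cocompleteness already secured in Theorem \ref{T32}.
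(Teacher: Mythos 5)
Your proof is correct, and the terminal-object part essentially coincides with the paper's: both take the coproduct of a set of representatives of all isomorphism classes of elementary objects (Lemma \ref{L32}) and use Lemma \ref{L33} to show every object maps into it (the paper phrases this via idempotency of $\oplus$, you via the coproduct's universal property applied to the elementary decomposition --- the same idea). Where you genuinely diverge is in the construction of non-empty products. The paper builds the terminal object $(\bb R,r)$ first and then realizes the product of $(A_j,a_j)$ concretely as the intersection $\bigcap_j f_j(A_j)$ of images inside $\bb R$; this has the side benefit of exhibiting every limit as a subsemigroup of the reals, which feeds directly into Proposition \ref{P32}. You instead construct the meet uniformly as the join of all elementary lower bounds: since $\nas$ is a thin category (a preorder), Lemma \ref{L33} says elementary objects generate under coproducts, so the set-sized coproduct over $\fcy E$ is a greatest lower bound --- in effect the adjoint functor theorem for preorders with a generating set. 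Your route is more uniform (the terminal object is just the empty product, with the condition on $\fcy E$ becoming vacuous) and does not need the terminal object as a stepping stone, at the cost of losing the explicit description of products as intersections in $\bb R$. Both arguments rest on the same three pillars: thinness, cocompleteness from Theorem \ref{T32}, and the size control of Lemmas \ref{L32} and \ref{L33}.
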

\begin{proof} Let $I$ be the set of isomorphism classes of elementary semigroups, which we know exists because of Lemma \ref{L32}. Using Theorem \ref{T32} we can take the coproduct of all elementary semigroups, 
$$(\bb R,r) \approx \bigoplus_{(E_i,e_i)\in I}(E_i,e_i).$$
By Lemma \ref{L33} any object $(B,b)$ is isomorphic to the coproduct of elementary objects. Using the idempotency of the coproduct,
$$(B,b)\oplus \bigoplus_{(E_i,e_i)\in I}(E_i,e_i)\approx \bigoplus_{(E_i,e_i)\in I}(E_i,e_i)\approx (\bb R,r) .$$
Therefore from the universal property of the coproduct there is a morphism $(B,b)\ra(\bb R,r)$. This is unique since $\nas$ is thin, and hence $(\bb R,r)$ is the terminal object in $\nas$.

We now show that $\nas$ has arbitrary products. Take a collection of objects $(A_j,a_j)$ indexed by a set $J$. Because $(\bb R,r)$ is a terminal object, there exists morphisms $f_j:(A_j,a_j)\ra (\bb R,r)$. Let us now defines
$$B = \bigcap_{j\in J} f_j(A_j).$$
This is non-empty since it contains $r$. For any $x,y\in B$, $x,y\in f_j(A_j)$ for every $j\in J$. It then follows that $x+y\in f_j(A_j)$ and so $x+y\in B$. Therefore $B$ is a subsemigroup of $\bb R$. It inherits the order on $\bb R$ and is so a non-anomalous semigroup.

\[\begin{tikzcd}
(C,c)\arrow{d}{g_j}\arrow[dashrightarrow]{r} & (B,r)\arrow{ld}\arrow{d}\\
(A_i,a_i)\arrow{r}{f_j} & (\bb R,r)
\end{tikzcd}\]

From the definition of $B$ there are inclusion morphisms $(B,r)\ra(A_j,a_j)$. Let there be morphisms $g_j:(C,c)\ra(A_j,a_j).$ Then each $g_j\circ f_j$ is a morphism $(C,c)\ra(\bb R,r)$ and hence all of these morphisms are equal. So $g_j\circ f_j(C)\subset B$ and hence there is a map from $(C,c)\ra(B,r)$. So $(B,r)$ satisfies the universal property of the product.

Since $\nas$ has arbitrary products and is thin, it follows that $\nas$ has all non-empty limits. We know that $\nas$ also has a terminal object, so $\nas$ is complete.
\end{proof}

To finish this section, we will show how $\nas$ can be reconstructed from the additive structure of $\bb R$. Choose some basepoint $r\in\bb R$. Define $\bo{sub}_{\bb R}$ to be the thin category where the objects are subsemigroups of $\bb R$ containing $r$, and where there is a morphism between $S_1$ and $S_2$ iff $S_1\subset S_2$. This is a small and skeletal category.

\begin{prop}\label{P32} There is an equivalence of categories between $\nas$ and $\bo{sub}_{\bb R}$. In particular, $\nas$ is essentially small.
\end{prop}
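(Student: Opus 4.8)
The plan is to exhibit explicit functors $F:\nas\ra\bo{sub}_{\bb R}$ and $G:\bo{sub}_{\bb R}\ra\nas$ and to show, using Proposition \ref{P31}, that they are mutually inverse up to natural isomorphism. Since both categories are thin, functoriality and naturality come essentially for free; the only genuine content is checking that the two composites agree with the identities on objects.

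First I would define $F$. For an object $(A,a)$, terminality of $(\bb R,r)$ (Theorem \ref{main}) gives a unique morphism $f_A:(A,a)\ra(\bb R,r)$, which is injective because every morphism in $\nas$ is. Hence $f_A(A)$ is a subsemigroup of $\bb R$ containing $r=f_A(a)$, i.e.\ an object of $\bo{sub}_{\bb R}$; set $F(A,a)=f_A(A)$. On a morphism $\fk:(A,a)\ra(B,b)$, thinness forces $f_B\circ\fk=f_A$, so $f_A(A)=f_B(\fk(A))\subseteq f_B(B)$, and $F$ sends $\fk$ to this inclusion. Dually, define $G$ by $G(S)=(S,r)$ for a subsemigroup $S\ni r$ — this is a pointed non-anomalous semigroup, the order and anomalous-pair conditions being inherited from $\bb R$ — and by sending an inclusion $S_1\subseteq S_2$ to the inclusion morphism $(S_1,r)\ra(S_2,r)$.

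Next I would compute the two composites on objects. For $FG$: given $S\ni r$, the inclusion $S\hookrightarrow\bb R$ is itself a morphism $(S,r)\ra(\bb R,r)$, so by thinness it \emph{is} the terminal morphism $f_S$; thus $FG(S)=f_S(S)=S$, and since $\bo{sub}_{\bb R}$ is skeletal this is the identity on objects. For $GF$: writing $S=f_A(A)$, the corestriction $f_A:(A,a)\ra(S,r)$ is a bijective order-embedding, hence an isomorphism in $\nas$ — its set-theoretic inverse is again a homomorphism and, by injectivity together with totality of the order, order-preserving. Therefore $GF(A,a)=(S,r)\approx(A,a)$ for every object.

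Finally I would invoke Proposition \ref{P31}. Part (4), applied to the pair $GF,\,1_{\nas}$, yields $GF\cong 1_{\nas}$ from the objectwise isomorphisms just established, and likewise $FG\cong 1_{\bo{sub}_{\bb R}}$; this is exactly an equivalence of categories. Essential smallness then follows because $\bo{sub}_{\bb R}$ is small — its objects are subsets of the fixed set $\bb R$ — so $\nas$ is equivalent to a small category. I expect the only delicate step to be verifying that the corestricted terminal map $f_A:(A,a)\ra(S,r)$ is an isomorphism rather than merely a bijective morphism, which is where injectivity of morphisms and the order-embedding property must be used together; everything else is a formal consequence of thinness and terminality.
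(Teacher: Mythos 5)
Your proposal is correct and follows essentially the same route as the paper: both construct the functor $(A,a)\mapsto f_A(A)$ via the terminal morphism, the inverse functor $S\mapsto (S,r)$, and then invoke Proposition \ref{P31} to upgrade the objectwise identifications to an equivalence, with essential smallness coming from the smallness of $\bo{sub}_{\bb R}$. Your explicit verification that the corestricted terminal map is an isomorphism is a welcome elaboration of a step the paper declares ``manifest,'' but it is not a different argument.
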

\begin{proof}
By Theorem \ref{main}, every object $(A,a)$ in $\nas$ uniquely embeds into $(\bb R,r)$ via some map $f_A$. So we can define a functor $M:\nas\ra\bo{sub}_{\bb R}$ which takes $(A,a)$ and maps it to $f_A(A)\subset \bb R.$ If there is a map $g:(A,a)\ra(B,b)$ then $f_A = f_B\circ g$ and hence $f_A(A)\subset f_B(B)$. So we define $M$ to take the morphism $g:(A,a)\ra(B,b)$ to the morphism $f_A(A)\ra f_B(B)$.

Any subsemigroup of $\bb R$ inherits a total order from $\bb R$, and so can be made into a non-anomalous semigroup. Define the functor $N:\bo{sub}_{\bb R}\ra\nas$ which takes the subsemigroup $S_1\subset \bb R$ and maps it to $(S_1,r)$. If $S_1\subset S_2$, then the injection map $i:S_1\ra S_2$ is a morphism which takes $r$ to $r$, and so we define $N(i):(S_1,r)\ra(S_2,r)$.

It is now manifest that for every $(A,a)\in\text{ob}(\nas)$, $NM(A,a)\approx (A,a)$. It is similarly straightforward to see that $MN(S) = S$ for every $S\in\text{ob}(\bo{sub}_{\bb R})$. So from Proposition \ref{P31}, there are natural isomorphisms $NM \approx 1_{\nas}$ and $MN\approx 1_{\bo{sub}_{\bb R}}$, so $M$ and $N$ are part of an adjoint equivalence. Since $\bo{sub}_{\bb R}$ is small, this implies that $\nas$ is essential small.
\end{proof}

\section{Groups and Orders}
\subsection{Archimedean Groups}
In the previous section we defined $\bb R$ to be the terminal object in $\nas$. Whilst this is a philosophically appealing definition, we have not proved any special properties about $\bb R$. In this section, we shall rectify this by providing a unique characterization of the group and order structure on $\bb R$.

We will begin by discussing non-anomalous groups. For these objects, the converse of Proposition \ref{L22} is true.

\begin{prop} \label{P91} All Archimedean groups are non-anomalous. \end{prop}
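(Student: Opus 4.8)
The plan is to argue by contradiction, exploiting the one feature a group has that a general cancellative semigroup lacks: the ability to subtract. Since we are working in the commutative additive setting established after Theorem \ref{T21}, suppose $G$ is an Archimedean group that nonetheless contains an anomalous pair $x>y$. By the definition of an anomalous pair this splits into two cases, one with $x,y\in G^+$ and one with $x,y\in G^-$, and these two cases are interchanged by passing to the dual group $\bar G$ via $\dk_G$: since $\dk_G$ reverses the order and carries $G^-$ onto $\bar G^+$, a negative anomalous pair in $G$ becomes a positive anomalous pair in $\bar G$. As Archimedeanness is self-dual (its two clauses are dual to one another), $\bar G$ is again an Archimedean group, so it suffices to rule out the positive case and let the negative case follow formally by duality.

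First I would unwind the positive anomalous condition, which in additive notation reads $nx<(n+1)y$ for every $n\in\bb N$. Here is the only place the group structure is used: set $w=x-y$, which is positive because $x>y$. Rewriting the anomalous inequality as $nx-ny<y$ and using commutativity to identify $nx-ny$ with $n(x-y)=nw$, I obtain $nw<y$ for every $n\in\bb N$. This is immediately incompatible with the Archimedean property: applied to the two positive elements $w$ and $y$, that property furnishes some $n$ with $y<nw$. The contradiction shows no positive anomalous pair exists, and by the duality reduction above no negative one does either, so $G$ is non-anomalous.

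I do not expect a serious obstacle in the computation itself; the real content of the proposition is simply the observation that in a group the element $x-y$ is available. The only two points that need care are the reduction of the negative case to the positive one through $\dk_G$, and the recognition that the step $nx-ny=nw$ is precisely where being a \emph{group}, rather than merely a cancellative Archimedean semigroup, is used. This is exactly why the converse of Proposition \ref{P22} holds for groups even though it fails for semigroups, and I would make that contrast explicit so the hypothesis is seen to be essential rather than incidental.
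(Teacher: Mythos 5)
Your proof is correct and follows essentially the same route as the paper: reduce to the positive case (the paper negates the pair inside the group, you pass to the dual, which amounts to the same thing) and then rewrite the anomalous condition $nx<(n+1)y$ as $n(x-y)<y$, contradicting the Archimedean property applied to the positive elements $x-y$ and $y$. Your added remark that subtraction is exactly where the group hypothesis enters is a fair gloss on why the converse of Proposition \ref{P22} fails for semigroups.
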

\begin{proof} Let $x,y$ be an anomalous pair in a group $G$ with $x>y$. We can take $x,y\in G^+$, since if they are not we can use $-x$ and $-y$ instead. Since we know that for every $n\in\bb N$, $(n+1)y>nx $, this means that $y>n(x-y)$ for any $n$. There $G$ cannot be Archimedean. So any Archimedean group must be non-anomalous. \end{proof}

As a result, non-anomalous groups are usually called Archimedean groups; likewise, for Archimedean rings and fields. We will write $\bo{AG}$ for the category of Archimedean groups. The arrows in this category are group homomorphisms which preserve or reverse the order, so that the category is a full subcategory of $\bo{NAS}.$

Analogous to our construction of $\nas$, we can consider $\bo{AG}_\bullet$, the category of pointed Archimedean groups. This is a full subcategory of $\nas$, and is hence thin category. It is straightforward to prove that $(\bb Z,1)$ is an initial object in $\bo{AG}_\bullet.$

\subsection{Order Properties of $\bb R$}
We will provide a unique characterization of $\bb R$ using order-theoretic properties. In particular we show that $\bb R$ is a group. Our characterization is due to Loonstra \cite{13}, though as our definition of the reals is different to the one used by Loonstra, our proof is different.

If an order contains no maximum or minimum element we say that it is \emph{unbound}. An order is \emph{dense} if for every $x>y$ there exists a $z$ so that $x>z>y.$ This just says that between any two elements is a third element.

For any order $A$, a subset $U$ is bound if there exists an $a\in A$ so that for every $u\in U$, $u\leq a.$ If for every bound subset $U$ in $A$ there is a least upper bound, we say that $A$ is \emph{complete}.

\begin{prop}\label{P92} Any complete totally ordered group is Archimedean.
\end{prop}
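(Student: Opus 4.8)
The plan is to argue by contradiction, manufacturing a bounded set whose least upper bound cannot exist. Write $G$ additively with identity $0$, so that $G^+ = \{x : x>0\}$ and the two Archimedean conditions read: for all $x,y\in G^+$ some $n\in\bb N$ has $x < ny$, and dually for $G^-$. I would first reduce to the positive condition by duality. The dual $\bar G$ is again a complete totally ordered group: completeness transfers because the order-reversing bijection $x\mapsto -x$ carries every bounded-below nonempty set to a bounded-above one, so infima in $G$ exist and these are exactly the suprema computed in $\bar G$. Since the positive part of $\bar G$ is the negative part of $G$, the failure of the negative Archimedean condition in $G$ is the failure of the positive one in $\bar G$. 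Hence it suffices to rule out the positive case.

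So suppose there exist $x,y\in G^+$ with $ny\leq x$ for every $n\in\bb N$. The key object is the set $A = \{ny : n\in\bb N\}$, which is nonempty (it contains $y$) and bounded above by $x$. By completeness, $A$ has a least upper bound $s=\sup A$. I would then exploit the group structure to contradict the \emph{minimality} of $s$: since $y>0$ we get $-y<0$ and therefore $s-y<s$ by axiom 4, so $s-y$ is not an upper bound of $A$. Consequently some $ny\in A$ satisfies $ny>s-y$. Adding $y$ on the right and using associativity and the inverse law, $(n+1)y>(s-y)+y=s$. But $(n+1)y\in A$, contradicting that $s$ bounds $A$ from above. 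This contradiction establishes the positive Archimedean condition, and with it, by the reduction above, the negative one.

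The routine parts are the translation into additive notation and the verification that $\bar G$ is complete. The only genuinely delicate point is the passage from $ny>s-y$ to $(n+1)y>s$: one must apply the order axiom on the correct side and check $(s-y)+y=s$ by associativity, so that the argument goes through without assuming $G$ is abelian — which matters here, since commutativity of Archimedean groups is only available \emph{after} one knows they are non-anomalous, and that in turn rests on the Archimedean property we are presently trying to prove. This is precisely the step where the completeness hypothesis does its real work, converting ``$s-y$ is not an upper bound'' into the existence of a strictly larger multiple of $y$.
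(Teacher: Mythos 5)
Your proof is correct and is essentially the paper's argument: both take the set of multiples of $y$, invoke completeness to get its supremum $s$, and derive a contradiction from $s-y$ (the paper shows $s-y$ is a smaller upper bound, you show it fails to be an upper bound and push past $s$ — the same idea read in two directions). Your added care about the negative case via duality and about not assuming commutativity is a welcome tightening of details the paper glosses over, but it does not change the route.
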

\begin{proof}
We will use proof by contradiction. Assume $G$ is complete, and that $x,y\in G$ satisfy $ny<x$ for all $n\in\bb N$. Since $Y = \{ny\ |\ n\in\bb N\}$ is a bound set, it has a least upper bound $z$. But then for every $n\in \bb N$,
$$(n+1)y < z \implies ny<z-y$$
and so $z-y$ is a bound on $Y$ which is smaller than $z$. We have a contradiction.
\end{proof}

\begin{prop} There exists an totally ordered group which is dense and complete.
\end{prop}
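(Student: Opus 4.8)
The plan is to exhibit the terminal object $(\bb R,r)$ of $\nas$ (Theorem \ref{main}), with $r$ taken positive, as such a group. All three parts---group, dense, complete---will run off a single mechanism, which I would isolate first. Suppose we enlarge $\bb R$ to a non-anomalous semigroup via an order embedding $\iota:(\bb R,r)\hookrightarrow (S,r)$ fixing the basepoint. Terminality of $(\bb R,r)$ supplies a morphism $\pi:(S,r)\to(\bb R,r)$; then $\pi\circ\iota$ is an endomorphism of $(\bb R,r)$ fixing $r$ and $\iota\circ\pi$ is an endomorphism of $(S,r)$ fixing $r$, so both equal the respective identities by Theorem \ref{T31}. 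Hence $\iota$ is an isomorphism, and being (up to identification) a set inclusion it is onto. So every non-anomalous order enlargement of $\bb R$ fixing $r$ is already all of $\bb R$.

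I would first prove $\bb R$ is a group. An idempotent in a cancellative semigroup is an identity, and embedding $(\bb Z,1)$ (an Archimedean group, hence a non-anomalous semigroup) into $(\bb R,r)$ sends $0$ to such an idempotent, giving $\bb R$ an identity $e$. For inverses, I form the group of differences $G=\{x-y:x,y\in\bb R\}$ of the commutative cancellative semigroup $\bb R$, with its induced total order. A short computation shows $G$ is Archimedean, hence non-anomalous by Proposition \ref{P91}, so $(G,r)$ lies in $\nas$ and $\bb R\hookrightarrow G$ is a basepoint-preserving morphism. The enlargement principle forces this inclusion to be onto, so $\bb R$ contains all inverses and is a group.

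Next, density. The rationals $(\bb Q,1)$ form an Archimedean group, hence a non-anomalous semigroup, so terminality yields an injective morphism $(\bb Q,1)\to(\bb R,r)$; writing $r/n$ for the image of $1/n$, we obtain positive elements with $n\cdot(r/n)=r$. Given $x>y$, set $d=x-y>e$; since $\bb R$ is Archimedean (Proposition \ref{P22}) there is an $n$ with $r<nd$, whence $e<r/n<d$ and therefore $y<y+r/n<x$. Thus $\bb R$ is dense.

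The completeness step is where I expect the real difficulty. Because $\bb R$ is now a dense totally ordered abelian group, its Dedekind completion $\hat{\bb R}$, formed from cuts, should carry an extended operation and order making it a complete totally ordered group containing $\bb R$; the technical heart is verifying that $\hat{\bb R}$ is again Archimedean, which I would deduce from the fact that every positive element of $\hat{\bb R}$ dominates a positive element of the dense Archimedean subgroup $\bb R$. Granting this, $(\hat{\bb R},r)$ is an object of $\nas$ and the inclusion $\bb R\hookrightarrow\hat{\bb R}$ is a basepoint-preserving morphism, so the enlargement principle forces it to be onto and $\bb R=\hat{\bb R}$ is complete. The main obstacles are precisely the two non-anomaly verifications---that the group of differences and the Dedekind completion are genuinely non-anomalous, so that Theorem \ref{T31} applies---together with checking that the completion of a dense ordered group inherits a compatible group structure.
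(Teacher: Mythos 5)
Your proof is essentially correct, but it takes a genuinely different route from the paper. The paper sidesteps $\bb R$ entirely: it exhibits an explicit dense group $T=\langle x_1,x_2,\dots\mid 2x_{i+1}=x_i\rangle$ (the dyadic rationals) and Dedekind-completes it, giving a bare existence witness whose only job is to make Theorem \ref{T91} non-vacuous; the fact that $\bb R$ itself is a dense complete group is then extracted \emph{afterwards} from Theorem \ref{T91}. You instead prove the stronger statement directly: that the terminal object $(\bb R,r)$ is a dense complete totally ordered group, using terminality as a ``no proper non-anomalous enlargement'' principle, with the group of differences supplying inverses, the image of $(\bb Q,1)$ supplying density, and the Dedekind completion supplying completeness. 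Your three applications of the enlargement principle are all sound (in each case the enlargement is Archimedean --- for the difference group by a short computation from non-anomalousness plus Proposition \ref{P22}, and for the completion you could simply cite Proposition \ref{P92} rather than arguing via density --- hence non-anomalous by Proposition \ref{P91}, so Theorem \ref{T31} applies). Note that both proofs lean on the same unproven technical fact, namely that the Dedekind completion of a dense Archimedean ordered group carries a compatible group structure; the paper explicitly declines to verify this, so you are no less rigorous on that point. What the paper's route buys is economy and independence from the $\nas$ machinery --- an explicit witness suffices for the existence claim, and the surjectivity argument is done once, inside Theorem \ref{T91}. What your route buys is that the conclusion ``$\bb R$ is a dense complete group'' arrives immediately and intrinsically, making the remark following Theorem \ref{T91} redundant; the cost is the extra Archimedean verifications and some duplication of the work that Theorem \ref{T91} performs anyway.
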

\begin{proof}
Any construction of the reals can be simplified into the construction of a dense and complete totally ordered group. Since reproducing such a construction here would be unwieldy and not particularly enlightening, we will not provide a complete proof. We will provide a sketch of a particularly simple construction. The abelian group
$$T = \langle x_1,x_2,...| 2x_{i+1} = x_i\rangle \text{ with order } x_i>0$$
is a dense group. We can now Dedekind complete $T$ to obtain a dense and complete totally ordered group. \end{proof}

\begin{lemma}\label{L91} In a dense totally ordered group $T$, for each $b\in T^+$ and $n\in\bb N$ there exists some $c\in T^+$ such that $b>nc$.\end{lemma}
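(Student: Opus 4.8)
The plan is to induct on $n$, using density to manufacture arbitrarily small positive elements. The base case $n=1$ is immediate: since $b\in T^+$ means $b>0$, applying density to the pair $b>0$ yields a $z$ with $b>z>0$, so $c=z$ lies in $T^+$ and satisfies $b>1\cdot c$.

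For the inductive step, suppose the claim holds for $n$ and let $b\in T^+$. The key obstacle is that a dense totally ordered group need not be divisible, so I cannot simply take $c=b/(n+1)$; instead I peel a small piece off the top of $b$ and apply the hypothesis to what remains. Concretely, I first use density on $b>0$ to pick $b'$ with $b>b'>0$, so that $b-b'\in T^+$ (translation invariance, axiom 4, gives $b>b'\implies b-b'>0$). By the inductive hypothesis applied to $b-b'$ there is a $c_0\in T^+$ with $b-b'>nc_0$. Now set $c=\min(c_0,b')$, which is positive since both $c_0$ and $b'$ are, and which exists because $T$ is totally ordered. Since $c\le c_0$ and $c>0$, multiplication by $n$ preserves order, giving $nc\le nc_0<b-b'$; and since $c\le b'$,
$$ (n+1)c = nc+c < (b-b')+c \le (b-b')+b' = b. $$
Hence $b>(n+1)c$ with $c\in T^+$, completing the induction.

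The only point requiring care is the inductive step. The naive move of reusing the hypothesis on $b$ itself fails, because adding the extra copy of $c$ could overshoot $b$. Splitting $b=b'+(b-b')$ and bounding the $n$ copies of $c$ by the gap $b-b'$ while bounding the single remaining copy by $b'$ is precisely what keeps the total strictly below $b$; letting $c$ be the smaller of the two witnesses $c_0$ and $b'$ allows one element to serve both bounds simultaneously. (Alternatively, one can iterate a single ``halving'' step $k$ times to produce $c$ with $2^k c<b$ and then use $n\le 2^k$, but the direct induction above is cleaner.)
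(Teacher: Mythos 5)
Your proof is correct, but it runs a different induction from the paper's. The paper first proves a ``weak halving'' lemma: given $b>b'>0$ from density, it observes that $b<2(b-b')$ and $b<2b'$ cannot both hold (adding them gives $2b<2b$), so $b\geq 2c$ for $c$ equal to one of $b-b'$ or $b'$; iterating this $k$ times yields $c>0$ with $b\geq 2^k c>kc$, and the induction is really on the number of halvings. You instead induct directly on $n$, applying the inductive hypothesis to the remainder $b-b'$ and taking $c=\min(c_0,b')$ so that the $n$ copies of $c$ fit under $b-b'$ while the last copy fits under $b'$. Both arguments use only density and translation invariance, and both hinge on the same decomposition $b=(b-b')+b'$; the paper's case analysis is slightly slicker in that it avoids the $\min$ and gets the exponential bound $2^k c\leq b$ for free, while your version gives the stated bound for each $n$ directly without the detour through powers of two. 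You even flag the paper's route as your parenthetical alternative, so the two are close cousins; the only point worth double-checking in yours is the step $nc\leq nc_0$, which follows from $c\leq c_0$ by $n$-fold application of axiom 4 (with the equality case handled separately), and that is fine.
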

\begin{proof} Let $b>0$. By denseness there exists a $b'$ such that $b>b'>0.$ If both $b<2(b-b')$ and $b<2b'$, then $2b<2b$ and we would have a contradiction. So either $b\geq 2(b-b')$ or $b\geq 2b'$. Hence for every $b>0$ there exists a $c>0$ so that $b\geq 2c$. By induction, for any $n\in\bb N$ and any $b>0$ there is a $c>0$ such that $b\geq 2^nc$. Since $2^nc>nc$, we find $b\geq2^n c>nc$ and this completes the proof.
\end{proof}

\begin{thm}\label{T91} Every dense and complete totally ordered group is isomorphic to $\bb R$.\end{thm}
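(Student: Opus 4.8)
The plan is to prove the stronger statement that any dense and complete totally ordered group $T$, based at a positive element $t$, is itself a \emph{terminal} object of $\nas$; since terminal objects are unique up to isomorphism and $(\bb R,r)$ is terminal by Theorem~\ref{main}, this yields $(T,t)\approx(\bb R,r)$ and hence $T\cong\bb R$ as ordered groups. Two preliminary observations make this legitimate. First, by Proposition~\ref{P92} the group $T$ is Archimedean, so by Proposition~\ref{P91} it is non-anomalous, and thus $(T,t)$ is an object of $\nas$. Second, $T$ is divisible: given $g\in T^+$ and $n\in\bb N$, the set $\{h\in T:nh\le g\}$ is non-empty and bounded above by $g$, so by completeness it has a least upper bound $h_0$, and Lemma~\ref{L91} applied to $g-nh_0$ or to $nh_0-g$ rules out $nh_0\ne g$; hence $nh_0=g$. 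In particular every dyadic multiple $(k/2^n)t$ is a well-defined element of $T$.

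To show $(T,t)$ is terminal I must exhibit, for each $(A,a)$ in $\nas$ with $a$ positive, a morphism $f\colon(A,a)\to(T,t)$; uniqueness is automatic since $\nas$ is thin (Theorem~\ref{T31}). I build $f$ by a H\"older-type construction valued in $T$, reusing the integer approximants $\bk_n=\bk_n^A$ of Lemma~\ref{L30}. For $x\in A^+$ set
$$f(x)=\sup_{n\in\bb N}\frac{\bk_n(x)}{2^n}\,t.$$
This supremum exists: by part~2 of Lemma~\ref{L30} the elements $\tfrac{\bk_n(x)}{2^n}t$ increase with $n$, and since $A$ is Archimedean they are bounded above, so completeness of $T$ supplies the supremum. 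For arbitrary $x\in A$, choose (using that $A$ is Archimedean) a $k$ with $x+ka\in A^+$ and put $f(x)=f(x+ka)-kt$; the shift identity $\bk_n(x+a)=\bk_n(x)+2^n$ from the proof of Lemma~\ref{L30} shows this is independent of $k$.

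It then remains to check that $f$ is an order-embedding homomorphism with $f(a)=t$. The basepoint condition is immediate from $\bk_n(a)=2^n$ (Lemma~\ref{L30}, part~1). Additivity follows from part~3: the increasing sequences $\tfrac{\bk_n(x+y)}{2^n}t$ and $\tfrac{\bk_n(x)+\bk_n(y)}{2^n}t$ differ by at most $\tfrac{1}{2^n}t$, which tends to $0$ as $T$ is Archimedean, so they share the common supremum $f(x)+f(y)$. Strict order preservation comes from part~4: if $y<x$ then $\bk_n(y)+1<\bk_n(x)$ for some $n$, which forces $f(y)<f(x)$ once one notes that $f(x)<\tfrac{\bk_n(x)+1}{2^n}t$ for every $n$. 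Hence $f$ is a morphism, $(T,t)$ is terminal, and the theorem follows.

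The main obstacle is the verification in the last paragraph — in particular, confirming that the $O(2^{-n})$ errors in Lemma~\ref{L30} genuinely disappear upon passing to suprema in the complete group $T$, and that the supremum defining $f$ interacts correctly with the group operation, so that $f(x)+f(y)$ equals the supremum defining $f(x+y)$ rather than merely bounding it (this uses that suprema of increasing sequences add in a complete ordered group). Establishing divisibility of $T$ cleanly from Lemma~\ref{L91} and completeness is the other delicate point, since it is precisely what makes the dyadic multiples $(k/2^n)t$ — the values taken by the approximants — available inside $T$ in the first place.
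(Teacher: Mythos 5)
Your proof is correct, but it runs in the opposite direction to the paper's. The paper starts from the terminality of $(\bb R,r)$ established in Theorem~\ref{main}: completeness gives the Archimedean property (Proposition~\ref{P92}), so $(T,t)$ is an object of $\nas$ and terminality supplies a morphism $f:(T,t)\ra(\bb R,r)$; the entire content of the paper's proof is then surjectivity of $f$, obtained by taking for each $q\in\bb R^+$ the least upper bound $l$ of $\{a\in T: f(a)<q\}$ and using Lemma~\ref{L91} to force $f(l)=q$. You instead prove the stronger statement that $(T,t)$ is itself terminal, by building a H\"older-type embedding of an arbitrary $(A,a)$ into $T$ from the approximants $\bk_n$ of Lemma~\ref{L30}, after first extracting divisibility of $T$ from completeness together with Lemma~\ref{L91} so that the dyadic values $\frac{\bk_n(x)}{2^n}t$ actually live in $T$. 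The bookkeeping you flag does go through: suprema of increasing sequences add because translation by a fixed element is an order isomorphism of $T$ and so preserves suprema, and the $\frac{1}{2^n}t$ error terms vanish because $T$ is Archimedean. One cosmetic slip: $f(x)<\frac{\bk_n(x)+1}{2^n}t$ should be $\leq$ (a supremum of terms strictly below a bound may attain it), but the strict integer inequality $\bk_n(y)+1<\bk_n(x)$ still yields $f(y)\leq\frac{\bk_n(y)+1}{2^n}t<\frac{\bk_n(x)}{2^n}t\leq f(x)$. What your route buys is a self-contained H\"older theorem valued in $T$: combined with the paper's unnumbered existence proposition for a dense complete ordered group, it gives an independent proof that $\nas$ has a terminal object, bypassing the coproduct-of-elementary-semigroups construction of Theorem~\ref{main}; the cost is that your argument is substantially longer than the paper's, which leans on that construction and only has to verify surjectivity of a single map.
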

\begin{proof} Let $T$ be a dense and complete totally ordered group. Using Proposition \ref{P92} and the definition of $\bb R$, there is a morphism $f:(T,t)\ra(\bb R,r).$ We will take both $t$ and $r$ to be positive. 

Let $q\in \bb R^+$ and define the set $L = \{a\in T:f(a)<q\}$. Since $\bb R$ is Archimedean, there is a $k\in\bb N$ so that $kr>q$, and hence $kt$ is an upper bound on $L$. Furthermore, $0\in L$, so $L$ has a least upper bound $l.$ Then for every $b\in\bb R^+$,
$$f(l+b)\geq q \implies f(b)\geq q - f(l) $$
$$f(l-b)\leq q \implies f(b)\geq f(l) - q $$
Fix some $b\in\ T^+$. By Lemma 7, for each $n\in\bb N$ there is a $c\in T^+$ so that $b>nc$. Then for every $n\in \bb N$
$$f(c)\geq q - f(l) \implies f(b)\geq n(q - f(l))$$
$$f(c)\geq q + f(l) \implies f(b)\geq n(f(l)-q)$$
Since $\bb R$ is Archimedean, we hence find that 
$$q - f(l)\leq 0 \implies q\leq f(l)$$
$$f(l) - q\leq 0 \implies q\geq f(l)$$
and so $f(l) = q.$ So every positive $q\in \bb R$ is in the image of $f$. But if $q$ is negative, $-q$ is in the image of $f$ and hence so is $q$. Therefore $f$ is surjective and so is an isomorphism.
\end{proof}
Theorem \ref{T91} implies that $\bb R$ is a group. Since $\bo{AG}_\bullet$ is a subcategory of $\nas$, $(\bb R,r)$ is the terminal object in $\bo{AG}_\bullet$.

\section{Rings and Fields}
\subsection{Archimedean Rings}
We will now consider the relationship between Archimedean groups, and Archimedean rings and fields. Specifically we shall show that the reals have a naturally field structure. When combined with \ref{T91}, we will prove that $\bb R$ is the unique ordered field up to a unique isomorphism.

We shall write the product of two elements $a$ and $b$ in a ring as $ab$, and the multiplicative identity as $1.$ An Archimedean ring is a ring over an Archimedean group, with the additional axiom that if $a>0$ and $b>0$, then $ab > 0$. We shall also demand that $1\neq 0$ so that the zero-ring is not Archimedean. Multiplication in an Archimedean ring is automatically commutative, which shall be proved shortly.

Our results can easily be extended to Archimedean semirings and semifields (that is, dropping the additive identity and inverse axioms, e.g. $\bb R^+$), but we shall restrict to rings and fields since these are the objects traditionally studied.

In this section, we aim to show there is an adjoint equivalence $\Gk,\Fk$ between $\bo{AR}$ and a full subcategory $\bo{AG}_\bullet^I$ of $\bo{AG}_\bullet.$ Our first step is to show that the faithful forgetful functor $F:\bo{AR}\ra\bo{AG}$ factorizes through $\bo{AG}_\bullet$. Any function between two rings must preserve the multiplicative identity. Since $1\neq 0$, we see that $F$ factorizes into a functor from $H:\bo{AR}\ra\bo{AG}_\bullet$ composed with the forgetful functor $G:\bo{AG}_\bullet\ra \bo{AG}.$ The functor $H$ takes a ring $R$ and maps it to its underlying Archimedean group, pointed by the multiplicative identity; we can write this as $F(R) \approx (R,1).$ This functor must be faithful, and so we can deduce that $\bo{AR}$ is a thin category.
\[\begin{tikzcd}
\bo{AR}\arrow[xshift=0.75ex]{r}{\Gk}\arrow[xshift=-0.75ex]{rd}{H}\arrow{d}{F}  & \arrow[xshift=0.5ex]{l}{\Fk} \bo{AG}_\bullet^I \arrow{d}{I}\\
\bo{AG}\arrow{r}{G}  & \bo{AG}_\bullet \\ 
\end{tikzcd}\]

Our next step is to define $\bo{AG}_\bullet^I$ and factorize $H = I\Gk$. We start with a lemma.

\begin{lemma}\label{L41} In an Archimedean ring $A$, every non-zero element $a\in A$ defines a morphism of the underlying Archimedean groups via $r_a(b) = ab.$
\end{lemma}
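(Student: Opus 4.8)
The plan is to check directly that $r_a$ satisfies the two defining conditions of a morphism of totally ordered semigroups. The additive condition is immediate: $r_a(b+c) = a(b+c) = ab + ac = r_a(b) + r_a(c)$ is just the left distributive law, which holds in any ring, so $r_a$ is a group homomorphism of the underlying Archimedean group $A$ into itself. The substance of the lemma is therefore entirely in the order condition.

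For the order condition I would split on the sign of $a$, which is well defined since $a\neq 0$ and $A$ is totally ordered. Suppose first that $a>0$. Given $b<c$ we have $c-b>0$, and the positivity axiom for the ring gives $a(c-b)>0$. Expanding $a(c-b) = ac-ab$ (using distributivity together with $a(-x) = -(ax)$, valid in any ring), we conclude $ab<ac$, i.e. $r_a(b)<r_a(c)$. Hence $r_a$ is order-preserving and is a positive morphism. If instead $a<0$, then $-a>0$, and repeating the computation with $-a$ in place of $a$ yields $(-a)(c-b)>0$, hence $ac-ab<0$ and $r_a(b)>r_a(c)$ whenever $b<c$. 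Thus $r_a$ reverses order and is a negative morphism.

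In either case we have verified the required implication $b<c\implies r_a(b)<r_a(c)$ (respectively $>$); because $A$ is totally ordered, trichotomy upgrades this forward implication to a genuine order embedding, and injectivity of $r_a$ then follows automatically, as noted for all morphisms in the discussion following the definition of a positive morphism.

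There is no serious obstacle here: the entire content reduces to translating the ring positivity axiom into monotonicity of multiplication, with the only point requiring a moment's care being the sign bookkeeping in the case $a<0$. It is worth recording that the argument implicitly shows an ordered ring has no zero divisors — if $a\neq 0$ then $r_a$ is injective, so $ab = 0 = a\cdot 0$ forces $b = 0$ — a fact that will be convenient when passing from rings to the multiplicative structure on $\bb R$.
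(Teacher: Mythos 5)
Your proposal is correct and follows essentially the same route as the paper: distributivity gives the group homomorphism, and the order condition is verified by splitting on the sign of $a$, using the ring positivity axiom for $a>0$ and reducing the case $a<0$ to the positive case via $-a$ (the paper phrases this as $-r_a = r_{-a}$, which is the same reduction). Your closing remarks on injectivity and the absence of zero divisors are accurate but not needed for the lemma.
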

\begin{proof} 
By the distributive law, $r_a$ is a group homomorphism. Furthermore, if $a$ is positive, then this preserves the order. To prove this, take $b>c$, which implies $b-c>0$. Then $a(b-c)> 0$ and so
$$r_a(b-c) = a(b-c)>0 \implies r_a(b)>r_a(c).$$
Therefore $r_a$ is a positive morphism if $a>0.$ If $a$ is negative, then
$$-r_a(b) = -ab = (-a)b = r_{-a}(b)$$
and hence $r_a$ is a negative morphism. This completes the proof.
\end{proof}

For an Archimedean group $A$, an \emph{initial element} $i\in A$ is an element such that there is a morphism $(A,i)\ra(A,a)$ for all $a\in A.$ 

\begin{prop}The identity in an Archimedean ring is an initial element of the underlying Archimedean group.
\end{prop}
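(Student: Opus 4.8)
The plan is to read the required morphisms straight off Lemma \ref{L41}. Recall that an initial element $i$ of an Archimedean group $A$ is one admitting a morphism $(A,i)\ra(A,a)$ for every admissible basepoint $a$; since a basepoint must be a non-identity, the relevant $a$ are exactly the nonzero ring elements. So I would fix an arbitrary nonzero $a\in A$ and produce an explicit morphism $(A,1)\ra(A,a)$.

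The candidate is the multiplication map $r_a(b)=ab$ supplied by Lemma \ref{L41}. That lemma already establishes that $r_a$ is a morphism of the underlying Archimedean groups — positive when $a>0$ and negative when $a<0$ — so all the homomorphism and order conditions are discharged there. The only remaining point is to see where $r_a$ sends the basepoint $1$: by the defining property of the multiplicative identity, $r_a(1)=a\cdot 1=a$, so $r_a$ is in fact an arrow $(A,1)\ra(A,a)$ in $\bo{AG}_\bullet$. As $a$ ranges over all nonzero elements, this gives a morphism out of $(A,1)$ to every object of the form $(A,a)$, which is precisely the assertion that $1$ is an initial element.

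There is essentially no obstacle here; the substance is entirely front-loaded into Lemma \ref{L41}. The two small things to keep honest are the range of the quantifier — $(A,0)$ is not an object of the pointed category, so ``for all $a$'' should be read as ``for all nonzero $a$,'' and the standing hypothesis $1\neq 0$ is what guarantees that $(A,1)$ is itself a legitimate pointed object — and the use of $1$ as a two-sided multiplicative identity to force $r_a(1)=a$. Everything else is inherited directly from the previously proved lemma, so I would expect the write-up to be only a couple of lines.
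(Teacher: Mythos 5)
Your proposal is correct and is essentially identical to the paper's own proof: both take an arbitrary non-zero $a$, invoke Lemma \ref{L41} to get the morphism $r_a(b)=ab$, and observe that $r_a(1)=a$ makes it an arrow $(A,1)\ra(A,a)$. Your extra care about excluding $a=0$ and using $1\neq 0$ is a reasonable clarification of the same argument, not a different route.
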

\begin{proof} Let $A$ be a ring and $a\in A$ any non-zero element. By Lemma \ref{L41}, any non-zero element $a$ defines a morphism $r_a$ by $r_a(b) = ab$. Since $r_a(1) = a$, there is a morphism $(A,1)\ra(A,a)$ and so $1$ is an initial object.
\end{proof}

We define $\bo{AG}_\bullet^I$ as the category of Archimedean group pointed by an initial element. From the above proposition we can factorize $H = \Gk I$. Here $\Gk:\bo{AR}\ra\bo{AG}_\bullet^I$ takes the underlying group of a ring and points by the identity, $\Gk(R) = (R,1)$, and $I$ is the inclusion functor $\bo{AG}_\bullet^I\ra \bo{AG}_\bullet.$ Our final task to show $\Gk$ is part of an adjoint equivalence. First we prove another lemma, from which the commutativity of Archimedean rings will follow as a consequence.

\begin{lemma}\label{L42} Given Archimedean rings $R_1$ and $R_2$, if $\Gk R_1\approx \Gk R_2$ then $R_1\approx R_2$.
\end{lemma}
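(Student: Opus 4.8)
The plan is to show that the unique isomorphism of pointed Archimedean groups underlying $\Gk R_1\approx\Gk R_2$ automatically respects multiplication, so that it is already a ring isomorphism. The guiding idea is that multiplication in an Archimedean ring is encoded by the maps $r_a$ of Lemma \ref{L41}, and these are themselves morphisms; since $\bo{AG}_\bullet^I$ is a full subcategory of the thin category $\nas$, such morphisms are rigid, and this rigidity is exactly what forces multiplication to be preserved once addition, order, and the basepoint are fixed.

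First I would extract from $\Gk R_1\approx\Gk R_2$ the unique isomorphism $\phi:(R_1,1)\ra(R_2,1)$ in the thin category $\bo{AG}_\bullet^I$. Thus $\phi$ is a group isomorphism that is an order embedding with $\phi(1)=1$. Since $1=1\cdot 1>0$ in any Archimedean ring and $\phi(1)=1$, the morphism $\phi$ cannot reverse the order (a negative morphism would send the positive element $1$ to a negative one), so $\phi$ is positive, and so is $\phi^{-1}:(R_2,1)\ra(R_1,1)$.

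The key step is to transport the multiplication maps through $\phi$. Fix a non-zero $a\in R_1$. By Lemma \ref{L41} the map $r_a(b)=ab$ is a morphism $R_1\ra R_1$, positive if $a>0$ and negative if $a<0$. Composing morphisms, $\phi\circ r_a\circ\phi^{-1}$ is then a morphism $R_2\ra R_2$ sending $1=\phi(1)\mapsto\phi(a\cdot 1)=\phi(a)$. On the other hand, again by Lemma \ref{L41}, $r_{\phi(a)}(c)=\phi(a)c$ is a morphism $R_2\ra R_2$ with $r_{\phi(a)}(1)=\phi(a)$. Both maps have the same sign as $a$ (since $\phi,\phi^{-1}$ are positive and $\phi(a)$ has the same sign as $a$), so both are morphisms $(R_2,1)\ra(R_2,\phi(a))$ in the thin category. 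By Theorem \ref{T31} a morphism is determined by the image of a single non-identity, whence
$$\phi\circ r_a\circ\phi^{-1}=r_{\phi(a)}.$$

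Finally I would unpack this identity: evaluating at $c=\phi(b)$ for an arbitrary $b\in R_1$ yields $\phi(ab)=\phi(a)\phi(b)$ for every non-zero $a$ and every $b$, and the case $a=0$ is immediate since $\phi(0)=0$. Hence $\phi$ preserves multiplication in addition to addition and the identity, so it is a ring isomorphism and $R_1\approx R_2$. I expect the main obstacle to be conceptual rather than computational: the crux is recognizing that multiplication-by-$a$ is a \emph{morphism}, so that thinness of the category of pointed Archimedean groups leaves no freedom and pins down the multiplicative structure entirely from the additive and order data together with the basepoint. Once that observation is in place, the rest is routine and no estimates are needed.
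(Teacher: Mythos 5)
Your proposal is correct and follows essentially the same route as the paper: both arguments transport the multiplication map $r_a$ of Lemma \ref{L41} through the group isomorphism, observe that the transported map and $r_{\phi(a)}$ agree at the basepoint $1$, and invoke Theorem \ref{T31} to conclude they coincide, which forces $\phi(ab)=\phi(a)\phi(b)$. Your version is slightly more careful about signs and the $a=0$ case, but the key idea and the supporting lemmas are identical to the paper's.
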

\begin{proof}
Given that $\Gk R_1\approx \Gk R_2$, there is an isomorphisms $i_1: GI\Gk(R_1)\ra GI\Gk(R_2)$. Using Lemma \ref{L41}, define morphisms $r^1_a$ and $r^2_a$ on $GI\Gk(R_2)$ via
$$r^1_a(b) = i_1(i_1^{-1}(a)i_1^{-1}(b)), \ \ \ r^2_a(b) = ab.$$
Since $HR_1\approx HR_2$, we know that $i_1(1) = 1$, and so 
$$r^1_a(i_1(1)) = i_1(i_1^{-1}(a)i_1^{-1}(i_1(1))) =  i_1(i_1^{-1}(a)) = a = r^2_a(1).$$
By Theorem \ref{T31} $r^1_a$ and $r^2_a$ are equal. So for every $c,d\in R_1$
$$i_1(cd) = r^1_{i_1(c)}(i_1(d)) = r^2_{i_1(c)}(i_1(d)) = i_1(c)i_1(d)$$
So $i_1$ is a Archimedean ring isomorphism, and hence $R_1\approx R_2.$
\end{proof}

As a consequence of the previous lemma, we prove that Archimedean rings are necessarily commutative.
\begin{prop} Every Archimedean ring is commutative.
\end{prop}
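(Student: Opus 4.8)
The plan is to exhibit commutativity as the assertion that the identity map is a ring homomorphism from $R$ to its opposite ring. Given an Archimedean ring $R$, I would first form the opposite ring $R^{\mathrm{op}}$: same underlying additive group and order, same multiplicative identity $1$, but with multiplication $a\cdot_{\mathrm{op}}b := ba$. A quick check shows $R^{\mathrm{op}}$ is again an Archimedean ring — the additive group and order are untouched, $1\neq 0$ still holds, and if $a>0$ and $b>0$ then $a\cdot_{\mathrm{op}}b = ba>0$ since the product of positive elements is positive in $R$. In particular the identity element is still an initial element, so $\Gk R^{\mathrm{op}}$ is defined and, crucially, $\Gk R$ and $\Gk R^{\mathrm{op}}$ are the \emph{same} pointed Archimedean group.

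I would then invoke Lemma \ref{L42}. What its proof actually establishes is stronger than the bare isomorphism $R\approx R^{\mathrm{op}}$: it shows that \emph{any} isomorphism $i_1$ of the underlying pointed Archimedean groups is automatically a ring isomorphism. Applying this with $i_1 = \mathrm{id}_R$, which fixes $1$ and preserves order, we conclude that $\mathrm{id}_R$ is a ring isomorphism $R\to R^{\mathrm{op}}$. Unwinding the definitions, for all $c,d\in R$,
$$cd = \mathrm{id}_R(cd) = \mathrm{id}_R(c)\cdot_{\mathrm{op}}\mathrm{id}_R(d) = c\cdot_{\mathrm{op}}d = dc,$$
which is precisely commutativity.

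The one subtlety, and the main thing to get right, is that the bare existence statement $R\approx R^{\mathrm{op}}$ is \emph{not} enough — for a general ring, $R\cong R^{\mathrm{op}}$ does not force commutativity — so the argument must use that the group isomorphism being promoted is the identity. It is cleanest to make this explicit by running the mechanism of Lemma \ref{L42} directly: for each nonzero $a\in R$, Lemma \ref{L41} gives that right multiplication $r_a(b)=ab$ is a morphism of the underlying Archimedean group, and the symmetric argument (again using that a product of positives is positive) shows left multiplication $\ell_a(b)=ba$ is a morphism of the same sign as $a$. Since $r_a(1)=\ell_a(1)=a$ and $1\neq 0$ is a non-identity, Theorem \ref{T31} forces $r_a=\ell_a$, i.e. $ab=ba$ for every $b$; the case $a=0$ is trivial. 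Beyond this bookkeeping there is no genuine obstacle.
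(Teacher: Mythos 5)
Your proof is correct and follows essentially the same route as the paper: form the opposite ring, observe that the underlying pointed Archimedean group is unchanged, and run the mechanism of Lemma \ref{L42} (equivalently, compare $r_a$ and $\ell_a$ via Theorem \ref{T31}) to force $ab=ba$. Your observation that the bare statement of Lemma \ref{L42} is insufficient and that one must use the \emph{specific} group isomorphism (here the identity map) is a genuine point of care that the paper's own wording glosses over, but it does not change the approach.
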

\begin{proof} Given an Archimedean ring $R$, we can define another Archimedean ring on $\bar R$ where the order of multiplication is reversed. That is, there is a bijection $i:R\ra \bar R$ which is an Archimedean group isomorphism, and
$$i(ab) = i(b)i(a).$$
The multiplicative identity of $R$ is $i(1)$, and so $HR_1\approx HR_2$. By Lemma \ref{L42}, $i$ is a ring isomorphism. But then
$$i(ab) = i(a)i(b) = i(b)i(a) = i(ba) $$
and so $ab = ba.$
\end{proof}

\begin{prop}\label{T42} The functor $\Gk$ is part of an adjoint equivalence between $\bo{AR}$ and $\bo{AG}_\bullet^I$. \end{prop}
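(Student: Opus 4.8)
The plan is to construct the claimed inverse functor $\Fk:\bo{AG}_\bullet^I\ra\bo{AR}$ explicitly and then to invoke Proposition \ref{P31} to promote the fact that $\Fk$ and $\Gk$ are mutually inverse into a genuine adjoint equivalence. Both categories are thin: $\bo{AG}_\bullet^I$ is a full subcategory of $\nas$, and we have already observed that $\bo{AR}$ is thin. Consequently, once I exhibit a functor $\Fk$ with $\Fk\Gk(R)\approx R$ and $\Gk\Fk(A,i)\approx (A,i)$ on objects, parts (3), (4) and (5) of Proposition \ref{P31} will supply the unit, the counit, and the adjunction automatically, so the real content lies in building $\Fk$.

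To define $\Fk$ on objects I must put an Archimedean ring structure on an arbitrary $(A,i)\in\bo{AG}_\bullet^I$; this is exactly the statement that $\Gk$ is essentially surjective. Take $i$ positive. For each non-identity $a\in A$, initiality of $i$ furnishes a morphism $(A,i)\ra(A,a)$, unique by Theorem \ref{T31}; call it $m_a$, and set $m_0=0$. Define $a\cdot b=m_a(b)$. The identity axiom is immediate ($m_i=1_A$ and $m_a(i)=a$), left distributivity holds because each $m_a$ is a homomorphism, and associativity follows by applying uniqueness to $m_a\circ m_b$ and $m_{a\cdot b}$, which agree at $i$. The order axiom is clear once one notes that for $a>0$ the morphism $m_a$ cannot reverse order, since it sends the positive $i$ to the positive $a$, and is therefore positive. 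The delicate point is right distributivity, $m_{a+b}=m_a+m_b$: the pointwise sum is a morphism only when $m_a$ and $m_b$ share a monotonicity, so I will first prove the identity for $a,b>0$ (where $m_a+m_b$ is a positive morphism agreeing with $m_{a+b}$ at $i$), then establish $m_{-a}=-m_a$ by the same uniqueness argument, and finally deduce the general case by splitting into the sign cases of $a$, $b$ and $a+b$ and reducing each to the positive cone. The result is an Archimedean ring, hence commutative by the preceding proposition, and by construction $\Gk\Fk(A,i)=(A,i)$.

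On morphisms I must check functoriality, which for thin categories amounts to verifying that a pointed group morphism is automatically a ring morphism. Given $\phi:(A,i)\ra(A',i')$, the two maps $b\mapsto\phi(m_a(b))$ and $b\mapsto m'_{\phi(a)}(\phi(b))$ are composites of morphisms, hence morphisms, and they agree at $i$ (both send it to $\phi(a)$); by Theorem \ref{T31} they coincide, so $\phi(ab)=\phi(a)\phi(b)$. Thus $\Fk$ is a well-defined functor and $\Gk\Fk=1_{\bo{AG}_\bullet^I}$ on the nose.

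It remains to produce the other natural isomorphism, where I will lean on Lemma \ref{L42} rather than recompute $\Fk\Gk$ directly: since $\Gk\Fk=1$, we have $\Gk(\Fk\Gk R)=\Gk\Fk\Gk R=\Gk R$, so Lemma \ref{L42} gives $\Fk\Gk R\approx R$. By Proposition \ref{P31}(4) this is a natural isomorphism $1_{\bo{AR}}\approx\Fk\Gk$, and together with $\Gk\Fk\approx 1_{\bo{AG}_\bullet^I}$ Proposition \ref{P31}(5) yields that $\Gk$ is left adjoint to $\Fk$ with invertible unit and counit, that is, an adjoint equivalence (indeed an isomorphism of categories). The main obstacle throughout is the object-level construction, and within it the verification of right distributivity, which is the only ring axiom that does not drop out immediately from the uniqueness of morphisms and instead demands the reduction to the positive cone described above.
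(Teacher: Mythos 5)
Your proposal is correct and follows essentially the same route as the paper: define $a\times b=f_a(b)$ via the unique pointed morphism $(A,i)\ra(A,a)$, verify the ring axioms through uniqueness of morphisms (Theorem \ref{T31}), and obtain the adjoint equivalence from Lemma \ref{L42} together with Proposition \ref{P31}. If anything you are more careful than the paper, which asserts without comment that $f_a+f_b$ is a morphism when proving $(a+b)\times c=a\times c+b\times c$; your reduction to the case $a,b>0$ followed by $m_{-a}=-m_a$ and a sign-case analysis is needed precisely because the pointwise sum of a positive and a negative morphism need not be a morphism.
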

\begin{proof} We will begin by showing every object in $\bo{AG}_\bullet^I$ has an associated ring structure. Since for every $a\in A$ there exists a unique $f_a: (A,i)\ra (A,a)$, define the binary operation
$$a\times b = f_a(b)$$
mapping $A\times A\ra A.$
We will prove that this operation is associative, has identity $i,$ obeys the distributive law and interacts properly with the order. We begin with the identity:
$$a\times i = f_a(i) = a, \ \ \ i\times a = f_i(a) = \text{id}(a) = a. $$
Right-distributivity is also easy
$$a\times(b+c) = f_a(b+c) = f_a(b)+f_a(c) = a\times b + a\times c.$$
Left-distributivity is a little trickier; if we define $g(c) = a\times c+ b\times c = f_a(c)+f_b(c)$ then this is a morphism with $g(i) = a+b = f_{a+b}(i).$ Hence for every $c\in A$
$$(a+b)\times c = f_{a+b}(c) = g(c) = a\times c+ b\times c .$$
Now we deduce associativity. Define $p_{a,b}(c) = (a\times b)\times c$ and $q_{a,b}(c) = a\times(b\times c).$ As $p_{a,b}(i) = q_{a,b}(i) = a\times b$, we have for every $c$,
$$(a\times b)\times c = p_{a,b}(i) = q_{a,b}(i) = a\times(b\times c).$$
Finally, we show that if $a,b>0$ then $a\times b>0.$ We have the freedom to choose $i$ to be positive. Then if $a$ and $b$ are positive, $f_a$ and $f_b$ are positive morphisms. So $f_a\circ f_b$ are positive morphisms, and as $i>0$, $f_a(f_b(i)) = a\times b > 0.$
\\

For a given $(A,i)\in\text{ob}(\bo{AG}_\bullet)$, let us write the above ring structure as $\Fk (A,i).$ We will show that $\Fk$ lifts to a functor $\Fk: \bo{AG}_\bullet^I\ra \bo{AR}$. This simply requires us show that any morphism $g:(A,i)\ra(B,j)$ is a ring homomorphism from $\Fk(A,i)$ to $\Fk(B,j).$ For any $a\in A$ we have the commutative diagram
\[\begin{tikzcd}
(A,i) \arrow{r}{f^A_a}\arrow{d}{h} & (A,a) \arrow{d}{h} \\ 
(B,j) \arrow{r}{f^B_{h(a)}} & (B,h(a))\end{tikzcd}\]
and hence $h$ is a ring homomorphism
$$h(a\times b) = h\circ f^A_a(b) = f^B_{h(a)}\circ h(b) = h(a)\times h(b).$$

We will now show that $\Gk$ and $\Fk$ are adjoint equivalences. For any $A\in\text{ob}(\bo{AG}_\bullet^I)$, it is evident that $A\approx \Gk\Fk A$ and so $1_{\bo{AG}_\bullet^I}$ is naturally isomorphic to $\Gk\Fk$. Lemma \ref{L42} then guarantees that $1_{\bo{AR}}$ and $\Fk \Gk$ are naturally isomorphic, and this completes the proof. \end{proof}

The initial object in $\bo{AG}_\bullet$, $(\bb Z,1)$, is in $\bo{AG}_\bullet^I$. This is because for any other $(\bb Z,n)$ is in $\bo{AG}_\bullet$ and hence $(\bb Z,1)\ra(\bb Z,n).$ So as a corollary of Proposition \ref{T42}, $(\bb Z,1)$ is associated with unique Archimedean ring structure $\bb Z\in \text{ob}(\bo{AR})$, and furthermore $\bb Z$ is the initial object in $\bo{AR}.$

\subsection{Archimedean Fields}
Our final topic is to discuss the relationship between Archimedean groups and Archimedean fields. An Archimedean field is an Archimedean ring where every element is invertible. Therefore $\bo{AF}$, the category of Archimedean fields, is a full subcategory of $\bo{AR}.$

Given an Archimedean group $A$, define a \emph{terminal element} $t\in A$ to be an elements such that for every $a\in A$, there exists a morphism $(A,a)\ra(A,t).$ This is dual to our definition of an initial element.

\begin{prop}\label{P41} If $A$ has a terminal element, then every element in $A$ is both initial and terminal.
\end{prop}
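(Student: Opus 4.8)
The plan is to reduce everything to a single claim: that the hypothesised terminal element $t$ is simultaneously an initial element. Throughout, write $x\Rightarrow y$ to mean that there is a morphism $(A,x)\to(A,y)$. Since $\bo{AG}_\bullet$ is a thin category this morphism is unique when it exists, morphisms compose, and any self-morphism of $A$ fixing a chosen basepoint must be the identity. With this notation the hypothesis is that $a\Rightarrow t$ for every non-identity $a\in A$, and I want to conclude that every $b$ satisfies both $b\Rightarrow a$ for all $a$ (so that $b$ is initial) and $a\Rightarrow b$ for all $a$ (so that $b$ is terminal).

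First I would establish the key step, that $t$ is also initial, i.e.\ $t\Rightarrow a$ for every $a$. Fix $a$ and let $f\colon(A,a)\to(A,t)$ be the morphism supplied by terminality, so $f$ is an injective group homomorphism with $f(a)=t$. The element $f(t)$ is again non-identity, since $f$ is injective and $t$ is not the identity, so terminality gives a morphism $g\colon(A,f(t))\to(A,t)$. Then $g\circ f$ is a self-morphism of $A$ with $(g\circ f)(t)=g(f(t))=t$, so by thinness $g\circ f=1_A$. Hence $f$ is a right inverse of $g$, so $g$ is surjective; being a morphism it is also injective, so $g$ is an automorphism of $A$ and its inverse $g^{-1}=f$ is again a morphism. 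In particular $g$ is a morphism with $g(t)=f^{-1}(t)=a$ (using $g=f^{-1}$ and $f(a)=t$), that is $t\Rightarrow a$. As $a$ was arbitrary, $t$ is initial.

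Finally I would finish by composition. With $t$ now both initial and terminal, take any elements $a$ and $b$. Terminality of $t$ gives $b\Rightarrow t$ and initiality of $t$ gives $t\Rightarrow a$, so composing yields $b\Rightarrow a$; hence $b$ is initial. Symmetrically, $a\Rightarrow t$ composed with $t\Rightarrow b$ gives $a\Rightarrow b$, so $b$ is terminal. Thus every element is both initial and terminal. The only genuine obstacle is the middle step, namely upgrading the terminal morphism $f$ to an isomorphism; the device that resolves it is to feed $f(t)$ back into the terminal property to manufacture a one-sided inverse $g$, after which thinness together with injectivity of morphisms forces $g$, and hence $f$, to be an automorphism. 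The argument is insensitive to whether the morphisms involved are order-preserving or order-reversing, since composites of morphisms are morphisms and the inverse of a bijective morphism is again a morphism.
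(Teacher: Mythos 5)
Your proposal is correct and follows essentially the same route as the paper: apply terminality to $f(t)$ to obtain $g$, note that $g\circ f$ fixes $t$ and is therefore the identity by Theorem \ref{T31}, conclude that $f$ is invertible so that $t$ is initial as well as terminal, and then obtain all other morphisms by composing through $t$. Your version is in fact slightly more careful than the paper's at the step ``$g\circ f=1_A$ and so $f$ is invertible,'' since you explicitly use injectivity of $g$ to upgrade the one-sided inverse to a two-sided one.
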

\begin{proof} Let $t\in A$ be a terminal element, and take some arbitrary $a\in A$. Then there exists an $f:(A,a)\ra(A,t)$. We also know that there exists a map $g:(A,f(t))\ra (A,t)$, and so $g\circ f : (A,t)\ra (A,t)$. By Theorem \ref{T31}, $g\circ f = 1_A$ and so $f$ is invertible. So $t$ is both an initial and terminal object.

But this means every element is an initial and terminal element, since for any $a$ and $b$ there exists morphisms $(A,a)\ra(A,t)\ra(A,b)$.
\end{proof}

Let $\bo{AG}_\bullet^T$ be the category of Archimedean groups pointed by terminal objects. The above proposition means that $\bo{AG}_\bullet^T$ is a subcategory of $\bo{AG}_\bullet^I$. In particular, $\Fk$ maps $\bo{AG}_\bullet^T$ to a subcategory of $\bo{AR}.$

\begin{prop}\label{T43} There is an equivalence of categories between $\bo{AG}_\bullet^T$ and $\bo{AF}.$
\end{prop}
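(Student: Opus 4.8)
The plan is to show that the adjoint equivalence $\Gk,\Fk$ of Proposition \ref{T42} restricts to the full subcategories $\bo{AF}\subseteq\bo{AR}$ and $\bo{AG}_\bullet^T\subseteq\bo{AG}_\bullet^I$. Concretely, I would verify that $\Fk$ carries $\bo{AG}_\bullet^T$ into $\bo{AF}$ and that $\Gk$ carries $\bo{AF}$ into $\bo{AG}_\bullet^T$; since both categories are thin, the restricted natural isomorphisms then follow automatically from Proposition \ref{P31}.

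First I would show that $\Fk(A,t)$ is a field whenever $t$ is a terminal element. Recall that the multiplication on $\Fk(A,t)$ is $a\times b = f_a(b)$, where $f_a:(A,t)\ra(A,a)$ is the unique morphism and $t$ is the multiplicative identity. By Proposition \ref{P41}, $t$ is simultaneously initial and terminal, so for each nonzero $a$ there is also a morphism $g_a:(A,a)\ra(A,t)$. The composite $g_a\circ f_a:(A,t)\ra(A,t)$ fixes $t$, so by Theorem \ref{T31} it equals $1_A$; likewise $f_a\circ g_a:(A,a)\ra(A,a)$ fixes $a$ and equals $1_A$. Thus $f_a$ is invertible, and setting $b = g_a(t)$ gives $a\times b = f_a(g_a(t)) = t$. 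Hence every nonzero element of $\Fk(A,t)$ has a multiplicative inverse, so $\Fk(A,t)$ is a field.

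Conversely, I would show that for a field $R$ the basepoint $1$ is a terminal element of the underlying group, so that $\Gk(R) = (R,1)$ lies in $\bo{AG}_\bullet^T$. Given any nonzero $a\in R$, its inverse $a^{-1}$ exists, and by Lemma \ref{L41} the map $r_{a^{-1}}(b) = a^{-1}b$ is a morphism of the underlying Archimedean group with $r_{a^{-1}}(a) = 1$. This is precisely a morphism $(R,a)\ra(R,1)$, so $1$ is terminal.

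With both functors restricting correctly, the final step is formal: $\bo{AF}$ and $\bo{AG}_\bullet^T$ are thin, and Proposition \ref{T42} gives $\Fk\Gk\approx 1$ and $\Gk\Fk\approx 1$ on the ambient categories, hence in particular at each object of the subcategories. By parts (3) and (4) of Proposition \ref{P31} these objectwise isomorphisms assemble into natural isomorphisms, yielding the desired equivalence. I expect the only real content to be the verification in the first step that terminality of the basepoint is exactly what is needed to invert $f_a$; once invertibility is in hand, the rest is bookkeeping with the thin-category formalism already developed.
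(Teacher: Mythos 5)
Your proposal is correct and follows essentially the same route as the paper: verify that $\Gk$ sends fields to terminally-pointed groups via $r_{a^{-1}}$, verify that $\Fk(A,t)$ is a field by inverting the structure morphism $f_a$ using terminality and Theorem \ref{T31}, and then conclude by formal thin-category bookkeeping. The only cosmetic difference is in the last step, where the paper tracks the factorizations $\Gk R = T\Psi$, $\Fk T = R\Xi$ and invokes full faithfulness of the inclusions, while you appeal directly to Proposition \ref{P31}; both are valid.
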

\begin{proof} Let $R:\bo{AF}\ra\bo{AR}$ and $T:\bo{AG}_\bullet^T\ra \bo{AG}_\bullet^I$ be the inclusion functors. From the previous section, we have an adjoint equivalence $\Gk,\Fk$ between $\bo{AR}$ and $\bo{AG}_\bullet^I.$
\[\begin{tikzcd}
\bo{AR} \arrow[xshift=0.75ex]{r}{\Gk} & \arrow[xshift=0.75ex]{l}{\Fk} \bo{AG}_\bullet^I \\
\bo{AF} \arrow{u}{R}\arrow[xshift=0.75ex]{r}{\Psi} & \arrow[xshift=0.75ex]{l}{\Xi}\bo{AG}_\bullet^T\arrow{u}{T} \end{tikzcd}\]
Let $F$ be a field in $\bo{AF}$, and take some $a\in F$. From Lemma \ref{L41}, we have an Archimedean group morphism $r_{a^{-1}}(b) = a^{-1}b$. Since $r_{a^{-1}}(a) = aa^{-1} = 1$, this is a morphism from $(A,a)\ra(A,1)$. As $a$ was arbitrary, $1$ is a terminal element. So $\Gk R = T\Psi$ for some functor $\Psi:\bo{AF}\ra\bo{AG}_\bullet^T.$

Now let $(A,t)$ be an object in $\bo{AG}_\bullet^T.$ By Proposition \ref{P41}, for non-zero $a\in A$ is terminal so there is an isomorphism $i:(A,t)\ra (A,a)$. Let $\times$ be the product on $\Fk(A,t).$ Using Lemma \ref{L41} to deduce that $r_a(b) = a\times b$ is an morphism of $A$, we know that
$$i(t) = a = a\times t = r_a(t) $$
and so by Theorem \ref{T31}, $i = r_a$. But then
$$t = i(i^{-1}(t)) = a\times i^{-1}(t) $$
and so $a$ has a multiplicative inverse $i^{-1}(t).$ Hence $\Fk(A,t)$ is a field. Since this is the case for any object in $\bo{AG}_\bullet^T$, there exists a $\Xi: \bo{AG}_\bullet^T \ra \bo{AF}$ so that $\Fk T = R\X.$

We now combine the relationships $\Fk T = R\X$ and $\Gk R = T\Psi$, we find that
$$\Gk\Fk T = \Gk R\Xi = T\Yk\Xi $$
$$\Fk\Gk R = \Fk T\Yk = R\Xi\Yk.$$
Since $\Fk$ and $\Gk$ are adjoint equivalences, there is a natural isomorphism from $T\ra T\Yk\Xi$ and $R\ra R\Xi\Yk$. But since $R$ and $T$ are fully faithful functors, this implies that $\Yk$ and $\Xi$ are adjoint equivalences.
\end{proof}

The terminal object $(\bb R,r)$ is pointed by a terminal element, since for any other $q\in\bb R$ there is a morphism $(\bb R,q)\ra(\bb R,r).$ So by Proposition \ref{T43}, $(\bb R,r)$ has an associated field structure. In other words, if we choose some element $1\in\bb R$, then there is a unique choice of product on $\bb R$ which has identity $1$, and this is automatically a field. Furthermore, $\bb R$ is the terminal object in $\bo{AF}.$
\begin{thm} Up to a unique isomorphism, $\bb R$ is the unique complete ordered field.\end{thm}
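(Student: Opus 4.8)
The plan is to reduce the statement to three facts already in hand: that $(\bb R,r)$ is the terminal object in $\bo{AF}$ and is carried to the field structure on $\bb R$ by the equivalence of Proposition \ref{T43}; that dense complete totally ordered groups are pinned down by Theorem \ref{T91}; and that isomorphic objects in a thin category are joined by a unique isomorphism.

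First I would check that any complete ordered field $F$ is an object of $\bo{AF}$. Its additive group is a complete totally ordered group, hence Archimedean by Proposition \ref{P92}, and the ordered-field axioms ($1\neq0$ and $a,b>0\Rightarrow ab>0$) make $F$ an Archimedean ring; being a field, $F\in\text{ob}(\bo{AF})$. I would also record that the order on $F$ is dense: since $1=1\cdot1>0$ and $1\neq0$, the element $2=1+1$ is positive and invertible, so for $x>y$ the element $y+2^{-1}(x-y)$ lies strictly between $y$ and $x$. Thus the additive group of $F$ is a dense complete totally ordered group.

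Next I would manufacture a group isomorphism and fix up its basepoint. By Theorem \ref{T91} the additive group of $F$ is isomorphic to $\bb R$; fix such an isomorphism $\psi\colon(\bb R,r)\to(F,\psi(r))$, an arrow of $\bo{AG}_\bullet$ (composing with a duality if it is order-reversing). Because $1_F$ is a terminal element of $F$ — for each $a$ the map $r_{a^{-1}}$ of Lemma \ref{L41} is a morphism $(F,a)\to(F,1_F)$ — Proposition \ref{P41} shows that \emph{every} element of $F$, in particular $\psi(r)$, is both initial and terminal. Hence $(F,\psi(r))$ and $(F,1_F)$ both lie in $\bo{AG}_\bullet^T$ and are isomorphic there, and composing with $\psi$ yields an isomorphism $(\bb R,r)\approx(F,1_F)$ in $\bo{AG}_\bullet^T$, where $(\bb R,r)$ already lies in $\bo{AG}_\bullet^T$ since $r$ is a terminal element.

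Finally I would transport this across the equivalence $\Psi,\Xi$ of Proposition \ref{T43}. Applying $\Xi$ gives $\Xi(\bb R,r)\approx\Xi(F,1_F)$ in $\bo{AF}$; since $\Psi(F)=(F,1_F)$ and $\Xi\Psi\approx1_{\bo{AF}}$, the right side is $F$, while the left side is the field structure carried by $\bb R$. Hence $F\approx\bb R$ as Archimedean fields. Uniqueness of the isomorphism is then automatic: $\bo{AF}$ is a full subcategory of the thin category $\bo{AR}$, so it is thin, and in a thin category any two isomorphic objects are joined by a unique isomorphism. The one step requiring care is the basepoint bookkeeping in the third paragraph — replacing $\psi(r)$ by $1_F$ via Proposition \ref{P41} — since it is precisely this that promotes the purely additive isomorphism of Theorem \ref{T91} to a morphism respecting the multiplicative identity, and hence to a genuine field isomorphism.
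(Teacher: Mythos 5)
Your proof is correct and follows essentially the same route as the paper: reduce to Theorem \ref{T91} for the additive/order structure, use the uniqueness of the field structure via the equivalence of Proposition \ref{T43}, invoke thinness of $\bo{AF}$ for uniqueness of the isomorphism, and show denseness is automatic in an ordered field using $2^{-1}$. The only difference is that the paper states the passage from a group isomorphism to a field isomorphism in one line, whereas you carefully carry out the basepoint adjustment from $\psi(r)$ to $1_F$ via Proposition \ref{P41} --- a step the paper leaves implicit and which your write-up makes fully rigorous.
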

\begin{proof} In Theorem \ref{T91} we proved that $\bb R$ was the unique complete and dense ordered group. In the above discussion we showed that $\bb R$ has a field structure, which is unique up to a choice of $1\in\bb R$. Since $\bo{AF}$ is a thin category, any isomorphism between two fields is unique. So $\bb R$ is the unique complete and dense ordered field, up to a unique isomorphism.

To complete the proof we shall show that the assumption of denseness is redundant. Given any two $x,y$ in an ordered field $F$ with $x>y$, we find that $2x>x+y>2y$. Multiplying this inequality by $2^{-1}$, we find that
$$x>2^{-1}(x+y) > y $$
and since our choice of $x$ and $y$ was arbitrary, $F$ is dense. This completes the proof.
\end{proof}

Since any non-anomalous semigroup can be embedded into the reals, it is natural to ask how to decide whether two subsemigroups of the reals are equal. This problem can now be solved with the field structure on the reals. The below proposition is a slight generalization of a result due to Hion \cite{41}.

\begin{prop} Given two non-anomalous semigroups $A,B$ with embeddings $i:A\ra\bb R$ and $j:B\ra\bb R$. Furthermore let there exist a morphism $f:A\ra B$. Then there exists an $\lk\in \bb R$ so that $\lk i(a) = f(j(a))$
\end{prop}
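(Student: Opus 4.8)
The plan is to recognize that both $i$ and the composite $j\circ f$ are morphisms from $A$ into $\bb R$, and that the proposition asserts precisely that these two morphisms differ only by multiplication by a fixed real $\lk$. (Note that the right-hand side must be read as $j(f(a))$, the only composite of $f,i,j$ landing in $\bb R$ when fed $a\in A$.) The entire argument then reduces to two facts already in hand: a morphism between non-anomalous semigroups is pinned down by its value on a single non-identity element (Theorem \ref{T31}), and multiplication by a nonzero real is itself a morphism of the additive group $\bb R$ (Lemma \ref{L41}, applied to the field structure on $\bb R$ established in the previous subsection).

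Concretely, I would first observe that $j\circ f:A\ra\bb R$ is a morphism, being the composite of the morphism $f$ with the embedding $j$; and $i$ is a morphism by hypothesis. If $A$ has no non-identity element the conclusion is immediate (any $\lk$ works), so assume otherwise and choose a non-identity $a_0\in A$, which we may take to be positive. Since $i$ is an order embedding, $i(a_0)>0$, and since $f(a_0)$ is a non-identity of $B$ its image $j(f(a_0))$ is a nonzero real. Using the field structure on $\bb R$ I would then set $\lk = j(f(a_0))\cdot i(a_0)^{-1}$, so that $\lk\, i(a_0)=j(f(a_0))$ holds by construction.

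The crux is to promote this agreement at the single point $a_0$ to agreement everywhere. By Lemma \ref{L41} the map $m_{\lk}:\bb R\ra\bb R$ given by $m_{\lk}(x)=\lk x$ is a morphism of the underlying Archimedean group. Hence $m_{\lk}\circ i$ and $j\circ f$ are two morphisms $A\ra\bb R$ that agree on $a_0$. Theorem \ref{T31} then forces $m_{\lk}\circ i = j\circ f$, that is, $\lk\, i(a)=j(f(a))$ for every $a\in A$, which is the desired conclusion.

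The only point requiring care, and the main though minor obstacle, is sign bookkeeping: $f$, and therefore $j\circ f$, may be a negative morphism, whereas $i$ is positive. This causes no difficulty, because fixing $\lk$ to match the signs of $i(a_0)$ and $j(f(a_0))$ automatically gives $m_{\lk}\circ i$ the same orientation as $j\circ f$ (two morphisms agreeing on a non-identity necessarily share a sign, since a positive morphism preserves and a negative morphism reverses the sign of $a_0$). Thus the hypotheses of Theorem \ref{T31} are met regardless of whether $f$ is positive or negative, and the argument goes through uniformly.
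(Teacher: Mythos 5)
Your proof is correct and follows essentially the same route as the paper's: define $\lk$ as the ratio $j(f(a_0))\,i(a_0)^{-1}$ at a single non-identity point, invoke Lemma \ref{L41} to see that multiplication by $\lk$ is a morphism of $\bb R$, and then apply Theorem \ref{T31} to upgrade agreement at $a_0$ to agreement everywhere. Your added remarks on the typo in the statement (reading $f(j(a))$ as $j(f(a))$) and on the sign bookkeeping are sensible refinements of the same argument, not a different one.
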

\begin{proof} Take some $\ak\in A$ and define $\lk = f(j(\ak))(i(\ak))^{-1}.$ By Lemma \ref{L41}, this defines a morphism $r_\lk(\ak) = \lk \ak$, and furthermore
$$r_\lk(\ak) = \lk i(\ak) = f(j(\ak))(i(\ak))^{-1}i(\ak) = f(j(\ak)).$$
So by Theorem \ref{T31}, $r_\lk \circ i = f\circ j$, and so for every $a\in A$, $\lk i(a) = f(j(a)).$
\end{proof}

\end{document}